\newtheorem{thm}{Theorem}[section]
\theoremstyle{definition}
\newtheorem*{defn}{Definition}
\newtheorem{conj}[thm]{Conjecture}
\theoremstyle{remark}
\newtheorem{remark}[thm]{Remark}
\title{Modular Zilber--Pink for Geometrically Generic Varieties}
\author[V. Aslanyan]{Vahagn Aslanyan}
\email{vahagn.aslanyan@manchester.ac.uk}
\address{Department of Mathematics, University of Manchester, Manchester, UK}
\author[S. Eterovi\'c]{Sebastian Eterovi\'c}
\email{sebastian.eterovic@univie.ac.at}
\address{Kurt G\"odel Research Center, Universit\"at Wien, 1090 Wien, Austria}
\author[G. Fowler]{Guy Fowler}
\email{guy.fowler@manchester.ac.uk}
\address{Department of Mathematics, University of Manchester, Manchester, UK\newline
	\indent Heilbronn Institute for Mathematical Research, Bristol, UK}
\date{}
\thanks{V.A. was supported by EPSRC Open Fellowship EP/X009823/1. S.E.~was supported by the Austrian Science Fund (FWF) 10.55776/ESP1584024. For the purpose of open access, the authors have applied a Creative Commons Attribution (CC BY) licence to any Author Accepted Manuscript version arising from this submission.}
\keywords{Modular Zilber--Pink, $j$-function, model theory of differential fields, Ax--Schanuel}
\subjclass[2020]{11F03, 11F23, 12H05, 12L12, 11U09}
\begin{document}

\begin{abstract}
    We prove that the modular Zilber--Pink conjecture (in Pink's formulation in terms of unlikely intersections) holds for all subvarieties $V$ of $ \mathrm{Y}(1)^n$ for which no projection to any $\dim V + 2$ coordinates is defined over the algebraic numbers.
\end{abstract}

\maketitle

\section{Introduction}
\label{sec:intro}
The modular curve $\mathrm{Y}(1):=\mathrm{SL}_2(\mathbb{Z})\backslash\mathbb{H}$ has the structure of a coarse moduli space for complex elliptic curves. 
Using the modular $j$-function, we can identify $\mathrm{Y}(1)$ with the complex affine line $\mathbb{A}_{\mathbb{C}}^1$, thus realizing $\mathrm{Y}(1)$ as an algebraic variety. 
The cartesian powers of $\mathrm{Y}(1)$ can then be viewed as moduli spaces for tuples of complex elliptic curves. 
Some subvarieties of $\mathrm{Y}(1)^n$ are compatible with the moduli structure on $\mathrm{Y}(1)^n$, and as such they are deemed to be \emph{special} (we will give a precise definition in \S\ref{subsec:specials}). 

Given a positive integer $n$ and $k \in \{0,\ldots,n\}$, let $\mathscr{S}_n^k$ denote the set of all special subvarieties of $\mathrm{Y}(1)^n$ of dimension at most $k$.
In this paper we prove the following \emph{unlikely intersections} result for sufficiently generic subvarieties of $\mathrm{Y}(1)^n$. 

\begin{thm}
\label{thm:genericmzp}
    Let $n$ be any positive integer. 
    Let $V\subseteq \mathrm{Y}(1)^n$ be an algebraic variety of dimension $d$. 
    Suppose that for every coordinate projection $\mathrm{pr}: \mathrm{Y}(1)^n\to \mathrm{Y}(1)^{d+2}$ to any $d+2$ coordinates, $\mathrm{pr}(V)$ is not definable over $\overline{\mathbb{Q}}$. 
    If $V$ is not contained in a proper special subvariety of $\mathrm{Y}(1)^n$, then
    \[ \bigcup_{S \in \mathscr{S}_n^{n- (d+1)}} V \cap S\]
    is not Zariski dense in $V$. 
\end{thm}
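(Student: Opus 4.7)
The plan is to argue by contradiction using the modular Ax--Schanuel theorem of Pila--Tsimerman as the central transcendence input. Suppose toward a contradiction that $\Sigma:=\bigcup_{S \in \mathscr{S}_n^{n-(d+1)}} V \cap S$ is Zariski dense in $V$. Replacing each $S$ by a minimal containing special subvariety of dimension exactly $n-(d+1)$, I may assume every non-empty $V\cap S$ is atypical (its expected dimension being $-1$), which is the setting in which Ax--Schanuel applies most cleanly.

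Modular Ax--Schanuel, in its Zilber--Pink form, provides the following dichotomy: either the atypical locus of $V$ in $\mathrm{Y}(1)^n$ is contained in a finite union of proper subvarieties of $V$, which immediately contradicts Zariski density and finishes, or $V$ itself is contained in a proper weakly special subvariety $T \subsetneq \mathrm{Y}(1)^n$. In the second case, since $V$ is not contained in any proper special, $T$ must take the form $T = T_{(i)} \times \{p^{(ii)}\}$ (after an appropriate coordinate re-grouping), where $T_{(i)} \subseteq \mathrm{Y}(1)^{N^{(i)}}$ is a positive-dimensional special subvariety and $p^{(ii)} \in \mathrm{Y}(1)^{N^{(ii)}}$ is a single point with at least one transcendental coordinate; call the $(ii)$-coordinates the \emph{constant} coordinates.

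The analysis in this case proceeds by noting that, because $p^{(ii)}$ is transcendental in some coordinate, any special $S$ with $V\cap S\neq\emptyset$ must place no constraint on the constant coordinates (otherwise $p^{(ii)}$ would lie on a proper $\overline{\mathbb{Q}}$-subvariety), so $S = S_{(i)} \times \mathrm{Y}(1)^{N^{(ii)}}$ with $S_{(i)}$ special in $\mathrm{Y}(1)^{N^{(i)}}$. The atypical intersections of $V$ thus correspond to atypical intersections of $V':=\operatorname{pr}_{(i)}(V) \subseteq \mathrm{Y}(1)^{N^{(i)}}$ with specials of appropriate dimension. The hypotheses on $V$ descend to analogous ones on $V'$: it is not contained in a proper special (since $V$ is not), and the genericity transfers under a careful bookkeeping accounting for how many coordinates of $p^{(ii)}$ are algebraic---at most $d+1$, since otherwise the original $(d+2)$-projection hypothesis on $V$ would be violated by projecting to those algebraic coordinates alone. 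This produces a smaller instance of the theorem and permits an inductive closure on $n$.

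The main obstacle I expect lies precisely in this final dimension bookkeeping: the transfer of genericity from $V$ to $V'$ requires care in boundary cases where $N^{(i)}$ is close to $d+1$ and the algebraic part of $p^{(ii)}$ interacts subtly with the projection condition; the threshold $d+2$ in the hypothesis appears to be calibrated exactly to make this bookkeeping go through. The base case of the induction, where $V$ admits no non-trivial weakly special containments, reduces directly via the first branch of the Ax--Schanuel dichotomy.
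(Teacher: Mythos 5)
Your central step is a dichotomy that does not exist. You invoke a form of modular Ax--Schanuel that would say: either the atypical locus of $V$ is contained in a finite union of proper subvarieties of $V$, or $V$ lies in a proper weakly special subvariety. If such a statement were available, the first branch would already prove the modular Zilber--Pink conjecture for every Hodge-generic variety not contained in a proper weakly special --- with no genericity hypothesis at all --- and the $\overline{\mathbb{Q}}$-non-definability condition on projections would never be needed. What Ax--Schanuel (and its ``weakly optimal'' refinements \`a la Habegger--Pila, Daw--Ren) actually gives is a finiteness statement about the weakly special closures of optimal subvarieties, i.e.\ these closures come from finitely many families; it says nothing about the atypical locus being confined to a proper Zariski-closed subset. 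That gap is exactly the content of the arithmetic Large Galois Orbits input which makes Zilber--Pink hard, and the whole point of the present theorem is to substitute the genericity hypothesis for that missing arithmetic input. Your proposal puts all the work into the (nonexistent) first branch and relegates the genericity hypothesis to ``bookkeeping'' in the second, which inverts the actual structure of the argument.

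Two further problems. First, your proposal makes no use of modular Mordell--Lang (Theorem~\ref{thm:modularmordell-lang} in the paper), which is an essential ingredient: it is what controls the case of intersections with special subvarieties that fix some coordinates at non-constant values (the paper encodes this via structures of finite Hecke rank $\Lambda$ and reduces to the boundary case $\nu=\ell_1$ with $\dim W<\ell_1$, where Mordell--Lang is invoked). Second, your description of the weakly special $T$ as $T_{(i)}\times\{p^{(ii)}\}$ with the further claim that any special $S$ meeting $V$ ``must place no constraint on the constant coordinates'' is false: a special $S$ may well impose modular relations between a constant coordinate and a moving one, or fix a constant coordinate at a singular modulus. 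The paper's actual proof handles these possibilities via a careful partition-type case analysis ($\nu$, $\ell_1$, $P_0'$) inside a saturated differentially closed field, combined with Morley sequences and the amplification trick of letting the sequence length $m\to\infty$ to extract a clean inequality from Ax--Schanuel. None of this machinery appears in your sketch. The projection hypothesis is used precisely to bound the number $\nu$ of constant coordinates and the transcendence degree of each $\mathbf{a}_i$ over the base field, not merely for ``bookkeeping'' at the end.
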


In terms of the modular Zilber--Pink conjecture (which will be recalled in various formulations in \S\ref{subsec:mzp}), this theorem says that, under the conditions of the theorem, the \emph{unlikely locus} of $V$ is not Zariski dense in $V$ (the definition of the unlikely locus will be given in \S\ref{subsec:mzp}).
 
The Zilber--Pink conjecture is a major topic of research in arithmetic geometry. 
In this paper we will only consider the modular version of Zilber--Pink (that is, the version concerning subvarieties of $\mathrm{Y}(1)^n$), but the conjecture has been formulated in many other settings, including (semi) abelian varieties, (mixed) Shimura varieties, and more generally variations of (mixed) Hodge structures (see \cite{daw:unlikelyintersectionsshimuravarieties,klingler:hodgetheoryicm,pila_2022} for various surveys).
Even in the modular case, very few instances of this conjecture have been proven, with the known cases usually focusing on the intersection of $V$ with special varieties of small dimension (such as the Andr\'e--Oort conjecture, see \S\ref{subsec:specials}), or by considering varieties $V$ of small dimension like curves or surfaces (see e.g.~\cite{daw-orr:zpproductmodularcurves,habegger-pila,papas:zpnmultiplicativedeg}).

Some aspects of Zilber--Pink are arithmetic in nature, and in some sense therein lies the main difficulty when approaching this conjecture. 
To be more precise, the results of Habegger and Pila in \cite{habegger-pila:o-min} (later extended to other Shimura varieties by Daw and Ren in \cite{daw-ren}) show that, using the Pila--Zannier strategy, one can reduce the Zilber--Pink conjecture to a problem of point-counting.
However, the point-counting required using this method depends on the Large Galois Orbits conjecture, (see \cite[Conjecture 8.2]{habegger-pila:o-min} for the modular case, and \cite[Conjecture 1.4]{daw-orr:lattices} for the case of more general Shimura varieties), of which few cases have been established. 

The conditions of Theorem \ref{thm:genericmzp} allow us to prove modular Zilber--Pink for as many subvarieties of $\mathrm{Y}(1)^n$ as possible, short of proving it for subvarieties defined over number fields, in which case deep arithmetic inputs are expected to be needed.
For example, if the statement of Theorem \ref{thm:genericmzp} could be improved to only require that $V$ itself be not defined over $\overline{\mathbb{Q}}$, then this would already allow us to obtain the full modular Zilber--Pink conjecture. 
Indeed, for any algebraic variety $V\subseteq \mathrm{Y}(1)^n$ we can choose an algebraic variety $W\subseteq \mathrm{Y}(1)^n$ which is not defined over $\overline{\mathbb{Q}}$, and then the cartesian product $V\times W$ is an algebraic subvariety of $\mathrm{Y}(1)^{2n}$ not defined over $\overline{\mathbb{Q}}$. 
Furthermore, the unlikely intersections of $V$ with special subvarieties of $\mathrm{Y}(1)^n$ can be lifted to unlikely intersections of $V\times W$ with special subvarieties of $\mathrm{Y}(1)^{2n}$. 
So proving the modular Zilber--Pink conjecture for $V\times W$ will imply the corresponding result for $V$. 

As we mentioned earlier, Theorem \ref{thm:genericmzp} is connected with one of the formulations of modular Zilber--Pink having to do with the unlikely locus of $V$. 
Other formulations of Zilber--Pink (which we will recall) speak about maximal atypical components or optimal subvarieties, and we show in Remark \ref{remark:atypical} a consequence of Theorem \ref{thm:genericmzp} in terms of these concepts.

\subsection{Related results}
\label{subsec:relatedresults}

The case of Theorem \ref{thm:genericmzp} where $V$ has dimension 1 (i.e.~$V$ is an algebraic curve) was proven by Pila in his modular Zilber--Pink result for generic curves \cite[Theorem 1.4]{pila:fermat}.
That proof follows the Pila--Zannier strategy and uses o-minimality, point-counting, and gonality estimates. 
As we explain later, our proof of Theorem \ref{thm:genericmzp} uses different techniques, thus giving in particular a new proof of Pila's result.

The non-definability condition in Theorem~\ref{thm:genericmzp} is similar in nature to the conditions required in the results present in \cite[Corollary 1.1]{pila-scanlon}, \cite[Appendix A]{barroero-dill:heckeorbits} and \cite{klingler-tayou:zilberpink}, all of which establish some form of generic Zilber--Pink result, but as we explain below, Theorem \ref{thm:genericmzp} is not a special case of any of them.

The results of Barroero and Dill in \cite[Appendix A]{barroero-dill:heckeorbits} are restricted to Shimura data whose associated $\mathbb{Q}$-algebraic group has (non-trivial) simple adjoint group, so it does not cover higher-powers of the corresponding Shimura varieties, like $\mathrm{Y}(1)^n$ with $n>1$. 
In particular, if one wanted to use \cite[Appendix A]{barroero-dill:heckeorbits} for powers of the modular curve, one could only use it for subvarieties of $\mathrm{Y}(1)$, in which case the modular Zilber--Pink conjecture becomes trivial. 

The results of Barroero and Dill were extended by Klingler and Tayou in \cite{klingler-tayou:zilberpink}, and in particular the results of the latter can be applied to powers of Shimura varieties. 
The condition ``no non-trivial images are defined over $\overline{\mathbb{Q}}$'' required by \cite[Theorem 1.6 and Corollary 1.7]{klingler-tayou:zilberpink} when specified to subvarieties of the Shimura datum of $\mathrm{Y}(1)^n$ amounts to the following condition: given a subvariety $V\subseteq \mathrm{Y}(1)^n$, every projection of $V$ to $\mathrm{Y}(1)$ is either constant or not defined over $\overline{\mathbb{Q}}$. 
However, the projection of $V$ to $\mathrm{Y}(1)$ is either dominant or constant, so the only case that falls under the conditions of \cite[Theorem 1.6]{klingler-tayou:zilberpink} is when every projection of $V$ to $\mathrm{Y}(1)$ is constant, meaning that $V$ is a point, but the modular Zilber--Pink conjecture again becomes trivial in this case. 

Although the proof we give of Theorem \ref{thm:genericmzp} is inspired by the work present in \cite{pila-scanlon}, our theorem also differs from \cite[Corollary 1.1]{pila-scanlon} as the latter only focuses on the subset of the unlikely locus of points none of whose coordinates are algebraic and which lie in the intersection of $V$ with the proper subset of $\mathscr{S}_n^{n-(\dim +1)}$ of \emph{strongly special} varieties (we recall the definition in \S\ref{subsec:specials}).

\subsection{Aspects of the proof}
Our methods use techniques from differential algebra, model theory, and some arithmetic input in the form of the modular Mordell--Lang theorem proven by Habegger and Pila (see \S\ref{subsec:mordell-lang}).
For the most part, the strategy for proving Theorem \ref{thm:genericmzp} is based on the work of Pila and Scanlon in \cite{pila-scanlon}. 
As such, the proof of Theorem \ref{thm:genericmzp} differs greatly in nature from the methods used in \cite{barroero-dill:heckeorbits} and \cite{klingler-tayou:zilberpink}, although functional transcendence results (namely, Ax--Schanuel) are a key ingredient in all proofs.

We expect that our work generalizes to other settings where corresponding Ax--Schanuel and Mordell--Lang theorems are known, such as Shimura varieties of abelian type, see \cite[Theorem 3.10]{aslanyan-daw} (Ax--Schanuel being provided by \cite{mpt}).

\subsection{Structure of the paper}
In \S \ref{sec:background} we cover the important definitions related to the various formulations of the modular Zilber--Pink conjecture (\S\ref{subsec:mzp}), and we also recall crucial results which lead up to the modular Mordell--Lang statement (\S\ref{subsec:mordell-lang}).
We also recall the differential formulation of the Ax--Schanuel theorem for $j$ (\S\ref{subsec:ax-schanuel}), as well as some notions from the model theory of differentially closed fields having to do with generic solutions (\S\ref{subsec:dcf}). 
The proof of the main theorem is then given in \S\ref{sec:main}.

\section{Background}
\label{sec:background}

This section is entirely devoted to standard definitions and results relevant to the proof of our main theorem. 
The reader who is familiar with the material can go to \S\ref{sec:main} and refer back to this section as needed. 

\subsection{Special varieties and modular Andr\'e--Oort}
\label{subsec:specials}

We denote by $\mathbb{H}$ the complex upper-half plane. 
The modular $j$ function $j:\mathbb{H}\to \mathbb{C}$ allows us to identify $\mathrm{SL}_2(\mathbb{Z}) \backslash \mathbb{H}\simeq \mathbb{C}$. 
From now on we identify $\mathrm{Y}(1)$ with the complex affine line $\mathbb{A}_\mathbb{C}^1$.

Let $\left\{\Phi_{N}(X,Y)\right\}_{N=1}^{\infty}\subseteq\mathbb{Z}[X,Y]$ denote the family of \emph{modular polynomials} associated with $j$ (see \cite[Chapter 5, Section 2]{lang:elliptic} for the definition and main properties of this family). 

Although we just identified $\mathrm{Y}(1)$ with $\mathbb{A}_{\mathbb{C}}^1$, in the proof of the main theorem we will consider other algebraically closed fields $K$ of characteristic zero, so we have opted to state some of the following definitions for any such $K$.

\begin{defn}
A finite set $A\subset K$ is said to be \emph{modularly independent} if for every pair of distinct numbers $a,b$ in $A$ and every positive integer $N$, we have that $\Phi_{N}(a,b)\neq 0$. 
Otherwise, we say that $A$ is \emph{modularly dependent}.

An element $w\in K$ is said to be \emph{modularly dependent over $A$} if either $w\in A$ or there is $a\in A$ such that the set $\{a,w\}$ is modularly dependent.
Otherwise we say $w$ is \emph{modularly independent over $A$}. 
\end{defn}

\begin{defn}
A point $w$ in $\mathbb{C}$ is said to be \emph{special} (also known as a \emph{singular modulus}) if there is $z$ in $\mathbb{H}$ such that $[\mathbb{Q}(z):\mathbb{Q}] = 2$ and $j(z)=w$. 
Special points have the property that they are algebraic integers and that every Galois conjugate of a special point is also a special point \cite[Proposition~25]{zagier:elliptic}.
Using this, we define a special point of $K$ to be any root in $K$ of the minimal polynomial (over $\mathbb{Q}$) of a special point in $\mathbb{C}$.

We further say that a point $\mathbf{w}$ in $\mathbb{A}^{n}_K$ is \emph{special} if every coordinate of $\mathbf{w}$ is special.
We denote the set of special points of $K$ by $\mathscr{S}^0_1(K)$. 
\end{defn}

\begin{defn}
A \emph{special subvariety} of $\mathbb{A}_{K}^{n}$ is an irreducible component of an algebraic set defined by equations of the following forms: 
\begin{enumerate}[(a)]
    \item  $\Phi_{N}(X_{i},X_{k}) = 0$, for some $N\in\mathbb{N}$, and
    \item $X_{i} = s$, where $s\in K$ is a special point. 
\end{enumerate}
We allow the set of equations to be empty, so $\mathbb{A}^{n}_{K}$ is itself a special variety. 
A special subvariety on which no coordinate has a fixed value is called a \emph{strongly special subvariety}.
\end{defn}

Every special point in $K$ is a solution of $\Phi_N(X,X)=0$ for some $N\in\mathbb{N}$, hence the condition $X_{i} = s$ in the previous definition can be replaced by the condition $\Phi_{N}(X_{i},X_{i}) = 0$. 

Every special variety has a Zariski dense set of special points (see e.g.~\cite[1.4 Aside]{pila:andre-oort}). 
The Andr\'e--Oort conjecture, proved by Pila, establishes the converse.

\begin{thm}[Andr\'e--Oort, {{\cite{pila:andre-oort}}}]
    \label{thm:andre-oort}
    Let $X\subseteq\mathbb{A}_{\mathbb{C}}^n$ be a non-empty collection of special points. 
    Then every irreducible component of the Zariski closure of $X$ is a special variety. 
\end{thm}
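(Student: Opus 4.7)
The plan is to follow the Pila--Zannier strategy, which has become the standard template for Andr\'e--Oort statements. We may immediately reduce to the case where $V \subseteq \mathbb{A}_\mathbb{C}^n$ is a geometrically irreducible subvariety containing a Zariski dense set of special points, and we must show that $V$ is itself special. Working with the uniformisation $j^n \colon \mathbb{H}^n \to \mathbb{A}_\mathbb{C}^n$, let $\mathcal{F} \subseteq \mathbb{H}$ be the standard fundamental domain for $\mathrm{SL}_2(\mathbb{Z})$, and consider the real-analytic set $Z := (j^n)^{-1}(V) \cap \mathcal{F}^n$. By work of Peterzil--Starchenko, $j\vert_\mathcal{F}$ is definable in the o-minimal structure $\mathbb{R}_{\mathrm{an},\exp}$, so $Z$ is definable in this structure.

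Each special point of $V$ lifts to an $n$-tuple of CM quadratic points in $\mathcal{F}^n$, whose coordinates are algebraic of degree $2$ and whose height grows only polynomially in the discriminants of the underlying CM orders. The Pila--Wilkie counting theorem applied to the transcendental part $Z^{\mathrm{tr}}$ of $Z$ then bounds the number of quadratic points of height at most $T$ on $Z^{\mathrm{tr}}$ by $O_\varepsilon(T^\varepsilon)$. In the opposite direction, a Brauer--Siegel/Colmez-type lower bound for the size of the Galois orbit of a special point of discriminant $D$, of order $|D|^{1/2-\varepsilon}$, together with the Galois-equivariance of $V$ after descending to a number field and the action of Hecke correspondences on the lifts, produces at least $T^{\delta}$ algebraic preimages of special points at height $\leq T$ for some $\delta > 0$. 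Comparing the two estimates forces positive-dimensional semi-algebraic subsets inside $Z$.

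The concluding step is the hyperbolic Ax--Lindemann theorem for $j$, which asserts that any maximal positive-dimensional semi-algebraic subvariety of $Z$ is the preimage of a weakly special subvariety of $\mathbb{A}_\mathbb{C}^n$. Hence $V$ contains positive-dimensional weakly special subvarieties through a Zariski dense set of special points; an induction on $\dim V$, combined with monodromy arguments to upgrade \emph{weakly} special to \emph{special}, then identifies $V$ itself as a special subvariety. The main obstacle is, as always in Pila--Zannier arguments, the Galois lower bound for the orbits of special points: it is the indispensable arithmetic input with no functional-transcendence substitute, and it is precisely the analogue of this step that is unavailable in the setting of Theorem~\ref{thm:genericmzp}, explaining why our main theorem must instead avoid varieties definable over $\overline{\mathbb{Q}}$.
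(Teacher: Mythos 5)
The paper does not prove Theorem~\ref{thm:andre-oort}: it is quoted verbatim as Pila's theorem from \cite{pila:andre-oort}, and the surrounding discussion uses it as a black box. So there is no in-paper argument to compare yours against; what can be assessed is whether your sketch faithfully reflects Pila's published proof, which it largely does.

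Two descriptive points in your sketch are inaccurate for this particular setting, though. First, the arithmetic lower bound on Galois orbits of singular moduli is the classical Landau--Siegel bound $h(D) \gg_\varepsilon |D|^{1/2-\varepsilon}$ for class numbers of imaginary quadratic orders; invoking ``Colmez-type'' estimates is a category error here, since Colmez's conjecture on Faltings heights enters only in the André--Oort problem for $\mathcal{A}_g$ and general Shimura varieties (Tsimerman, Andreatta--Goren--Howard--Madapusi Pera, Yuan--Zhang), not for $\mathrm{Y}(1)^n$. Second, no ``monodromy argument'' is needed to pass from weakly special to special in $\mathbb{A}^n_{\mathbb{C}}$: as the paper itself records immediately after defining weakly special subvarieties, a weakly special subvariety is special precisely when it contains a special point, because the fixed coordinate values are then forced to be singular moduli. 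Monodromy considerations (in the style of Klingler--Ullmo--Yafaev) are relevant for general Shimura varieties but are superfluous here. A smaller omission: you should note that the Zariski density of (algebraic) special points forces $V$ to be defined over $\overline{\mathbb{Q}}$, which is what licenses the Galois-equivariance step. With those corrections your outline is a correct high-level account of Pila's proof, and your closing remark that the Galois lower bound is exactly the arithmetic input unavailable in the setting of Theorem~\ref{thm:genericmzp} is on target.
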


\begin{defn}
A subvariety of $\mathbb{A}_{K}^{n}$ is called \emph{weakly special} if it is an irreducible component of an algebraic set defined by equations of the following forms: 
\begin{enumerate}[(i)]
    \item $\Phi_{N}(X_{i},X_{k}) = 0$, for some $N\in\mathbb{N}$,
    \item $X_{\ell} = d$, for some constant $d\in K$.
\end{enumerate}
 In particular, a weakly special subvariety is special if and only if it contains a special point.
\end{defn}

We remark that the irreducible components of any non-empty intersection of (weakly) special subvarieties are again (weakly) special subvarieties. 
This allows us to make the following definition.

\begin{defn}
Given an irreducible constructible set $X\subseteq\mathbb{A}^n_{K}$, we denote by $\mathrm{spcl}(X)$ the \emph{special closure} of $X$, that is, the smallest special subvariety containing $X$. 
Similarly, we denote by $\mathrm{wspcl}(X)$ the \emph{weakly special closure} of $X$.
\end{defn}

Now we introduce some notation for coordinate projections. 
Let $n$ and $\ell$ denote positive integers with $\ell \leq n$, and let $\mathbf{i}=(i_1,\ldots,i_\ell)$ denote a point in $\mathbb{N}^{\ell}$ with $1\leq i_1 < \ldots < i_\ell \leq n$. 
Define the projection map $\mathrm{pr}_{\mathbf{i}}:\mathbb{A}^{n} \rightarrow \mathbb{A}^{\ell}$ by
\[\mathrm{pr}_{\mathbf{i}}:(x_1,\ldots,x_n)\mapsto (x_{i_1},\ldots,x_{i_\ell}).\] 
In particular we distinguish between the natural number $\ell$ and the tuple $\boldsymbol{\ell}=(1,\ldots,\ell)$. 

\begin{remark}
If $T$ is a (weakly) special subvariety of $\mathbb{A}^{n}_K$, then for any choice of indices $1\leq i_{1}<\cdots<i_{\ell}\leq n$ we have that $\mathrm{pr}_{\mathbf{i}}(T)$ is a (weakly) special subvariety of $\mathbb{A}^{\ell}_K$. 
\end{remark}

\subsection{Modular Zilber--Pink}
\label{subsec:mzp}

Suppose that $V$ and $W$ are subvarieties of a smooth algebraic variety $Z$. 
Let $X$ be an irreducible component of the intersection $V\cap W$. 
We say that $X$ is an \emph{atypical component of $V\cap W$ (in $Z$)} if
\[\dim X > \dim V + \dim W - \dim Z.\]
We say that the intersection \emph{$V\cap W$ is atypical (in $Z$)} if it has an atypical component. 
Otherwise, we say that \emph{$V\cap W$ is typical (in $Z$)}, i.e.~$V\cap W$ is typical in $Z$ if $\dim V\cap W = \dim V + \dim W - \dim Z$.

As before, throughout $K$ will denote an algebraically closed field of characteristic zero.

\begin{defn}
Given a subvariety $V \subseteq \mathbb{A}^{n}_K$, we say that $X$ is an \emph{atypical component of $V$} if there exists a special subvariety $S$ of $\mathbb{A}^{n}_K$ such that $X$ is an atypical component of $V\cap S$. 
We remark that in this case, since $\dim S\geq \dim\mathrm{spcl}(X)$, it is also true that $X$ is an atypical component of $V\cap \mathrm{spcl}(X)$.

We say that $X$ is a \emph{strongly atypical component of $V$} if $X$ is an atypical component of $V$ and no coordinate is constant on $X$. 

An atypical (resp.~strongly atypical) component of $V$ is said to be \emph{maximal} (in $V$) if it is not properly contained in another atypical (resp.~strongly atypical) component of $V$.  

We define the \emph{unlikely locus of $V$} to be the set of all points $\mathbf{x}\in V$ for which there is a special variety $S$ with $\dim V+ \dim S<n$ such that $\mathbf{x}\in V\cap S$, i.e.~the set of points $\mathbf{x} \in V$ such that $\{\mathbf{x}\}$ is an atypical component of $V$. 
\end{defn}

\begin{defn}
Let $V$ be a subvariety of $\mathbb{A}^{n}_K$. 
Given a subvariety $X\subseteq V$, we define the \emph{defect of $X$} to be
\[\mathrm{def}(X)\coloneqq \dim\mathrm{spcl}(X) - \dim X.\]
We say that $X$ is \emph{optimal in $V$} if for every subvariety $W\subseteq V$ satisfying $X\subsetneq W$ we have that $\mathrm{def}(X)<\mathrm{def}(W)$. 
We let $\mathrm{Opt}(V)$ denote the set of all optimal subvarieties of $V$. 
Observe that always $V\in\mathrm{Opt}(V)$. 
\end{defn}

\begin{remark}
\label{rem:maxatypisopt}
A maximal atypical component of $V$ is optimal in $V$. 
However, optimal subvarieties need not be maximal atypical.

On the other hand, if $X$ is a proper subvariety of $V$ which is optimal in $V$, then $\mathrm{def}(X) < \mathrm{def}(V)$ which implies that $\dim V\cap\mathrm{spcl}(X)\geq \dim X > \dim V + \dim\mathrm{spcl}(X) - \dim\mathrm{spcl}(V)$, so the intersection $V\cap\mathrm{spcl}(X)$ is atypical. 
\end{remark}

\begin{defn}
We say that an algebraic variety $V\subseteq \mathbb{A}_{K}^{n}$ is \emph{Hodge-generic} if $V$ is not contained in any proper special subvariety of $\mathbb{A}_K^n$.    
\end{defn}

We will now recall the modular Zilber--Pink conjecture, which has appeared in the literature in many formulations.
Recall that $\mathscr{S}_n^k$ denotes the set of all special subvarieties of $\mathbb{A}^n_{\mathbb{C}}$ of dimension at most $k$.

\begin{conj}[Modular Zilber--Pink, unlikely version]
\label{conj:mzp}
For every positive integer $n$ and any Hodge-generic algebraic variety $V\subseteq \mathbb{A}_{\mathbb{C}}^{n}$, the set
\[\bigcup_{S\in\mathscr{S}_n^{n - (\dim V + 1)}}V\cap S\]
is not Zariski dense in $V$. 
\end{conj}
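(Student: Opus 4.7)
The statement is the full modular Zilber--Pink conjecture, which remains open; the proposal below is the ``standard'' Pila--Zannier attack developed by Habegger and Pila in \cite{habegger-pila:o-min}. I would lift $V$ through the uniformisation $j^n \colon \mathbb{H}^n \to \mathbb{A}^n_{\mathbb{C}}$ and set $Z = (j^n)^{-1}(V) \cap \mathcal{F}^n$, where $\mathcal{F}$ is the standard fundamental domain for $\mathrm{SL}_2(\mathbb{Z})$. Then $Z$ is definable in the o-minimal structure $\mathbb{R}_{\mathrm{an},\exp}$. Special subvarieties of $\mathbb{A}^n_{\mathbb{C}}$ of dimension at most $n-(\dim V+1)$ correspond, via $j^n$, to $\mathrm{GL}_2(\mathbb{Q})^+$-translates of ``diagonals'' on $\mathbb{H}^n$, and atypical components of $V \cap S$ lift to quadratic or Hecke-translated algebraic points on $Z$ of controlled height and degree.

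The plan then combines three ingredients. First, the Pila--Wilkie counting theorem applied to $Z$ (or to a definable block-family parameterising its intersections with translates of the ``diagonals'') yields a polynomial upper bound for the number of relevant algebraic points of height at most $T$, after removing finitely many semi-algebraic blocks. Second, the Ax--Schanuel theorem for $j$ (\S\ref{subsec:ax-schanuel}) is used to show that any such semi-algebraic block whose image under $j^n$ is Zariski dense in $V$ would force $V$ into a proper weakly special subvariety; combined with Andr\'e--Oort (Theorem~\ref{thm:andre-oort}) to upgrade weakly special to special in the presence of special points, this contradicts Hodge-genericity. Third, a polynomial lower bound for the Galois orbits of the special subvarieties $S$ appearing in atypical intersections --- the modular Large Galois Orbits conjecture, \cite[Conjecture 8.2]{habegger-pila:o-min} --- when matched against the Pila--Wilkie upper bound forces the atypical locus into a Zariski-closed proper subset of $V$, which is the desired conclusion.

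The decisive obstacle is the third ingredient. Large Galois Orbits is known unconditionally only for special \emph{points} (via the Brauer--Siegel-type estimates underlying the proof of Andr\'e--Oort) and for special subvarieties of very restricted type; no general lower bound is available, and this is precisely the gap that keeps the conjecture open. An alternative route, in the differential-algebraic spirit of the present paper, would replace counting by the differential Ax--Schanuel theorem (\S\ref{subsec:ax-schanuel}) together with the modular Mordell--Lang theorem (\S\ref{subsec:mordell-lang}) following \cite{pila-scanlon}; but this substitution removes the need for Galois bounds only when one can exclude the coordinate projections of $V$ being defined over $\overline{\mathbb{Q}}$, so on its own it yields the conditional Theorem~\ref{thm:genericmzp} rather than the unconditional conjecture. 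Absent a new arithmetic input providing Galois lower bounds for positive-dimensional special subvarieties, I would expect any proof sketched along these lines to remain conditional on Large Galois Orbits.
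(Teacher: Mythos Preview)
Your assessment is correct: the statement in question is \emph{Conjecture}~\ref{conj:mzp}, and the paper does not prove it. It is stated as an open conjecture, and the paper's contribution is the conditional Theorem~\ref{thm:genericmzp}, proved precisely via the differential-algebraic route you describe in your final paragraph (Ax--Schanuel plus modular Mordell--Lang, following \cite{pila-scanlon}), under the additional hypothesis that no projection of $V$ to $d+2$ coordinates is defined over $\overline{\mathbb{Q}}$.

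Your identification of the obstruction is also accurate and matches what the paper says in \S\ref{sec:intro}: the Habegger--Pila reduction leaves the Large Galois Orbits conjecture as the missing arithmetic input, and the differential approach sidesteps it only at the cost of the genericity hypothesis. The paper even remarks explicitly that weakening the hypothesis of Theorem~\ref{thm:genericmzp} to ``$V$ itself not defined over $\overline{\mathbb{Q}}$'' would already yield the full conjecture, so the gap you point to is exactly the one the authors acknowledge. There is nothing to correct in your proposal; it is an honest account of why no unconditional proof is available.
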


\begin{conj}[Modular Zilber--Pink, atypical version]
    For every positive integer $n$ and any algebraic variety $V\subseteq \mathbb{A}_{\mathbb{C}}^{n}$, $V$ contains only finitely many maximal atypical components.
\end{conj}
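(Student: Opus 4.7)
The plan is to derive this atypical version from the unlikely version (Conjecture \ref{conj:mzp}) by a standard inductive equivalence argument. First, I would reduce to the Hodge-generic case: if $V$ is contained in a proper special subvariety, replace the ambient $\mathbb{A}_{\mathbb{C}}^n$ by $\mathrm{spcl}(V)$, which is well-behaved here because irreducible components of intersections of special subvarieties are again special, and so the notions of atypicality, $\mathrm{spcl}$, and defect transport correctly under this reduction.

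Next, I would induct on the pair $(n, \dim V)$ in lexicographic order; the cases $\dim V = 0$ or $\dim V = n$ are essentially trivial. For the inductive step, let $X$ be a maximal atypical component of $V$ with $X \subsetneq V$, and set $S = \mathrm{spcl}(X)$. Then atypicality reads $\mathrm{codim}_S(X) < \mathrm{codim}_{\mathbb{A}^n}(V)$, and by maximality $X$ is an irreducible component of $V\cap S$. When $\dim X = 0$, $X$ lies in the unlikely locus of $V$; the unlikely version confines such points to a proper Zariski-closed subset of $V$, and combining this with Andr\'e--Oort (Theorem \ref{thm:andre-oort}) upgrades non-density to finiteness, since any infinite collection of special points would have positive-dimensional special Zariski closure, contradicting the fact that each isolated point is its own maximal atypical component. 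When $\dim X > 0$, the subvariety $X$ is by construction Hodge-generic inside $S$, a special subvariety of strictly smaller ambient dimension than $\mathbb{A}^n$ (or one may pass to $X\subseteq V\cap S$ of strictly smaller dimension than $V$), and the inductive hypothesis applied to $X$ in the appropriate ambient controls the accumulation of such higher-dimensional atypical components. Throughout, the defect function $\mathrm{def}$ and the implication ``maximal atypical $\Rightarrow$ optimal'' from Remark \ref{rem:maxatypisopt} provide the bookkeeping that makes the induction close.

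The main obstacle is to stitch these dimension-by-dimension finiteness statements into a single global finiteness claim: as $S$ ranges over the countably infinite family of special subvarieties of $\mathbb{A}_{\mathbb{C}}^n$, one must argue that only finitely many of them produce maximal atypical components of $V$, rather than merely that each individual intersection $V\cap S$ is controlled. A naive per-$S$ application of Conjecture \ref{conj:mzp} does not suffice; one typically needs either a parametric/family version of the unlikely conjecture applied to the universal intersection $V\cap S$ as $S$ varies in a Hecke-indexed family, or the full Ax--Schanuel plus modular Mordell--Lang machinery developed in this paper (cf.~\S\ref{subsec:ax-schanuel}, \S\ref{subsec:mordell-lang}), which is precisely what allows one to absorb the countable union over $S$ into a single geometric-generic argument over a differentially closed field.
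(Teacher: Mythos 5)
The statement you are trying to prove is a \emph{conjecture} in the paper, not a theorem: the paper does not prove the atypical version of modular Zilber--Pink, and indeed no proof of it exists. What the paper does is state three equivalent formulations (unlikely, atypical, optimal) and cite the equivalences to external sources, namely \cite[Lemma~2.7]{habegger-pila:o-min} for atypical $\Leftrightarrow$ optimal, and \cite[\S12]{barroero-dill:distinguishedcategories} for atypical $\Leftrightarrow$ unlikely. The main theorem of the paper (Theorem~\ref{thm:genericmzp}) establishes only a special case of the \emph{unlikely} version for geometrically generic varieties, and Remark~\ref{remark:atypical} then derives a corresponding \emph{non-density} (not finiteness) statement about maximal atypical components via generic linear sections. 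So there is no ``paper's own proof'' to compare against: your proposal is an attempt to prove the implication ``unlikely version $\Rightarrow$ atypical version,'' which is not carried out in the paper at all.

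Even taken on its own terms as a proof of that implication, your argument has genuine gaps. In the $\dim X = 0$ case you invoke Andr\'e--Oort (Theorem~\ref{thm:andre-oort}) to upgrade non-density to finiteness, but points of the unlikely locus are in general not special points: a point $\mathbf{x}$ lies in $\Upsilon(V)$ as soon as it belongs to $V\cap S$ for some special $S$ with $\dim V + \dim S < n$, and such $\mathbf{x}$ can be transcendental. Andr\'e--Oort says nothing about the Zariski closure of such a set, so this step does not go through. For the $\dim X > 0$ case, passing to $S=\mathrm{spcl}(X)$ and applying an inductive hypothesis there does not on its own bound the number of such $X$ as $S$ ranges over the countably infinite family of special subvarieties; you acknowledge this yourself in the final paragraph, noting that a naive per-$S$ application of Conjecture~\ref{conj:mzp} does not suffice and that some uniform/parametric input is needed. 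That gap is exactly where the equivalence is nontrivial and why the paper defers to \cite{barroero-dill:distinguishedcategories} rather than re-deriving it; the literature handles this via Habegger--Pila's finiteness for families of optimal subvarieties using the defect condition, not by the inductive scheme you sketch.
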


\begin{conj}[Modular Zilber--Pink, optimal version]
    For every positive integer $n$ and any algebraic variety $V\subseteq \mathbb{A}_{\mathbb{C}}^{n}$, $\mathrm{Opt}(V)$ is a finite set.
\end{conj}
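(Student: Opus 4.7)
The plan is to reduce the optimal version to an inductive statement about maximal atypical components and then to combine the geometric input of Theorem~\ref{thm:genericmzp} with the Pila--Zannier strategy to treat the arithmetic cases it does not cover. By Remark~\ref{rem:maxatypisopt}, every proper optimal subvariety $X\subsetneq V$ yields an atypical intersection $V\cap\mathrm{spcl}(X)$, and a bookkeeping on the defect (paired with the fact that $\mathrm{Opt}(X)\subseteq\mathrm{Opt}(V)$ for optimal $X$) shows that $\mathrm{Opt}(V)$ is finite provided every $V'\subseteq V$ of dimension $\leq\dim V$ has only finitely many maximal atypical components. So the task reduces to the atypical version of modular Zilber--Pink, which I would attack by double induction on $n$ and on $\dim V$, with the base case $n=1$ trivial since the only special subvarieties of $\mathbb{A}_{\mathbb{C}}^1$ are special points and the whole line.

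For the inductive step, I would split $V\subseteq\mathbb{A}_{\mathbb{C}}^n$ into two regimes. When $V$ satisfies the hypothesis of Theorem~\ref{thm:genericmzp}, the unlikely locus of $V$ is not Zariski dense in $V$; combining this with the Ax--Schanuel theorem for $j$ (\S\ref{subsec:ax-schanuel}), which controls weakly special closures of atypical components, one would show that every positive-dimensional maximal atypical component of $V$ lies in a proper subvariety $V_1\subsetneq V$, whence the inductive hypothesis in $\dim V$ applies. Isolated atypical points fall under Andr\'e--Oort (Theorem~\ref{thm:andre-oort}) applied to the proper special subvarieties meeting $V$, which contributes only finitely many components.

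The remaining regime is when some coordinate projection $\mathrm{pr}(V)$ to $d+2$ coordinates is defined over $\overline{\mathbb{Q}}$. Here the natural route is the Pila--Zannier method as implemented for $\mathbb{A}_{\mathbb{C}}^n$ by Habegger and Pila~\cite{habegger-pila:o-min}: lift $V$ to a fundamental domain for the action of $\mathrm{SL}_2(\mathbb{Z})^n$ on $\mathbb{H}^n$, apply the Pila--Wilkie counting theorem to the definable set of pre-images of maximal atypical components of $V$, and compare the resulting upper bound on heights of pre-images of special points with a lower bound coming from the sizes of Galois orbits. Components whose projection to the $\overline{\mathbb{Q}}$-defined factor is fibrewise constant are handled by induction on $n$ after reduction to that factor.

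The main obstacle, and the step I expect to resist the present toolkit, is that the height comparison in the last paragraph depends on the Large Galois Orbits conjecture~\cite[Conjecture~8.2]{habegger-pila:o-min}, which remains open in the required generality. A potential bypass, which I would attempt, is to combine the differential-algebraic framework of \S\ref{subsec:dcf} with a uniform version of the modular Mordell--Lang theorem (\S\ref{subsec:mordell-lang}): working in a relative setting over a generic transcendental base and then degenerating, one would try to transfer the conclusion of Theorem~\ref{thm:genericmzp} to the $\overline{\mathbb{Q}}$-defined case. Making this transfer precise is the decisive difficulty, since any method that handles $\overline{\mathbb{Q}}$-defined varieties without Large Galois Orbits would amount to a significant advance beyond the current state of the conjecture.
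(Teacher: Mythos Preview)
The statement you are attempting to prove is stated in the paper as a \emph{conjecture}, not a theorem; the paper does not contain a proof of it, and indeed the paper's introduction explains that the full modular Zilber--Pink conjecture is open precisely because of the Large Galois Orbits obstacle you yourself flag in your final paragraph. So there is no ``paper's own proof'' against which to compare your proposal. Your closing paragraph is an honest self-assessment: any argument covering the $\overline{\mathbb{Q}}$-defined case without Large Galois Orbits would be a genuine breakthrough, and your ``degenerate from generic transcendental base'' idea is a hope rather than a method.

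Even in the regime where you invoke Theorem~\ref{thm:genericmzp}, your sketch has a gap. That theorem only gives non-Zariski-density of the \emph{unlikely locus}, and the paper is explicit (Remark~\ref{remark:atypical}) that this does \emph{not} readily yield finiteness of maximal atypical components or of $\mathrm{Opt}(V)$: one only deduces that the union of maximal atypical components is not Zariski dense in $V$. Your proposed induction on $\dim V$ then fails for two reasons: first, ``not Zariski dense'' is weaker than ``contained in finitely many proper subvarieties of lower dimension with the same genericity property''; second, the proper subvariety $V_1\subsetneq V$ you produce has no reason to satisfy the non-$\overline{\mathbb{Q}}$-definability hypothesis on its $(\dim V_1+2)$-fold projections, so you cannot re-apply Theorem~\ref{thm:genericmzp} to it. The side claim that $\mathrm{Opt}(X)\subseteq\mathrm{Opt}(V)$ for optimal $X$ is also not justified and is not how the equivalence in \cite[Lemma~2.7]{habegger-pila:o-min} proceeds.
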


The equivalence between the atypical and optimal versions is given by \cite[Lemma 2.7]{habegger-pila:o-min}.
The equivalence between the atypical and unlikely versions can be found in \cite[\S12]{barroero-dill:distinguishedcategories}.
For a Hodge-generic curve $V \subset \mathbb{A}^n$, the unlikely locus
\[\bigcup_{S\in\mathscr{S}_n^{n - 2}}V\cap S\]
is precisely equal to the union of the atypical components of $V$ and also equal to the union of the proper optimal components of $V$. 
Thus, for curves, the three versions of the conjecture are evidently equivalent.

\subsection{\texorpdfstring{$\Lambda$}{Lambda}-special varieties and modular Mordell--Lang}
\label{subsec:mordell-lang}
\begin{defn}
Given a subset $A\subseteq K$, the \emph{Hecke orbit} of $A$ is defined as
\[\mathrm{He}(A)\coloneqq\left\{z\in K : \exists a\in A\exists N\in \mathbb{N}(\Phi_N(z,a) = 0)\right\}.\]
\end{defn}

\begin{defn}
A subset $\Lambda\subseteq\mathbb{A}^n_K$ is called a \emph{structure of finite Hecke rank} if there is a set $\Lambda_0\subseteq K$ which contains only finitely many non-special points, and such that
\[\Lambda = \mathrm{He}\left(\Lambda_0\cup \mathscr{S}^0_1(K)\right)^n.\]
\end{defn}

Following \cite{aslanyan:atypical}, we also make the following definition.

\begin{defn}
Given a structure of finite Hecke rank $\Lambda\subseteq\mathbb{A}^n_K$, we say that a subvariety $S$ of $\mathbb{A}^n_K$ is $\Lambda$\emph{-special} if $S$ is weakly special and contains a point of $\Lambda$. 
In particular, a $\Lambda$-special point is just a point $\mathbf{v} \in \Lambda$.

Let $V\subseteq\mathbb{A}^n_K$ be an algebraic variety. 
A \emph{$\Lambda$-atypical} component of $V$ is an atypical component whose weakly special closure is a $\Lambda$-special subvariety.
The \emph{$\Lambda$-unlikely locus} of $V$ is the set of points $\mathbf{x} \in V$ such that $\{\mathbf{x}\}$ is a $\Lambda$-atypical component of $V$.
\end{defn}

Observe that a $\Lambda$-special subvariety of $\mathbb{A}_K^n$ without constant coordinates is always a strongly special subvariety of $\mathbb{A}_K^n$.

The main number-theoretic input we will use to prove Theorem \ref{thm:genericmzp} is the following result of Habegger and Pila. 

\begin{thm}[Modular Mordell--Lang, {{\cite{habegger-pila,pila:specialpts}}}]
\label{thm:modularmordell-lang}
Let $V\subseteq\mathbb{A}^n_{\mathbb{C}}$ be an algebraic subvariety and let $\Lambda\subseteq\mathbb{A}^n_{\mathbb{C}}$ be a structure of finite Hecke rank. 
Then $V$ contains only finitely many maximal $\Lambda$-special subvarieties. 
\end{thm}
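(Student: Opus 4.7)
The plan is to follow the Pila--Zannier strategy, now standard for diophantine finiteness statements on Shimura varieties. The three key ingredients are the definability of $j|_{\mathcal{F}}$ in $\mathbb{R}_{\mathrm{an},\exp}$ (Peterzil--Starchenko), the Pila--Wilkie counting theorem, and the Colmez--Tsimerman polynomial lower bound for Galois orbits of singular moduli, together with the modular Ax--Schanuel theorem. Combined, these assemble into a proof in the style of Pila's proof of modular Andr\'e--Oort \cite{pila:andre-oort} and its extension to ``Hecke + special'' orbits in \cite{habegger-pila,pila:specialpts}.

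First I would reduce to the case where $V$ is irreducible and Hodge-generic; if $V \subseteq \mathrm{spcl}(V) \subsetneq \mathbb{A}^n$, then $\mathrm{spcl}(V)$ is a product of modular curves and one argues inside this lower-dimensional ambient variety after suitably restricting $\Lambda$. Recall that a weakly special subvariety $W \subseteq \mathbb{A}_{\mathbb{C}}^n$ is determined by combinatorial data --- a partition of the coordinates into blocks tied together by modular polynomials, together with a subset $C \subseteq \{1,\ldots,n\}$ of coordinates on which $W$ is constant --- plus a specific choice of constants $(d_\ell)_{\ell \in C}$. Only finitely many combinatorial types $T$ occur as weakly special subvarieties of $V$, so it suffices to prove, for each $T$, that finitely many maximal $\Lambda$-special subvarieties of $V$ are of type $T$. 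Within a fixed $T$, such subvarieties are parametrised by tuples $(d_\ell)_{\ell \in C}$ whose entries lie in $\mathrm{He}(\Lambda_0 \cup \mathscr{S}^0_1(\mathbb{C}))$.

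Next I would lift to the upper half plane. Let $V^\uparrow \coloneqq (j|_{\mathcal{F}})^{-n}(V)$, a set definable in $\mathbb{R}_{\mathrm{an},\exp}$, and parametrise candidate $\Lambda$-special subvarieties of type $T$ by tuples $(\tau_\ell, g_{ij})$ with $\tau_\ell \in \mathcal{F}$ satisfying $j(\tau_\ell)=d_\ell$, and with $g_{ij} \in \mathrm{GL}_2^+(\mathbb{Q})$ encoding the Möbius relations between non-constant coordinates. The containment $W \subseteq V$ becomes a definable condition, producing a definable set $D \subseteq \mathcal{F}^{|C|} \times \mathrm{GL}_2(\mathbb{R})^e$. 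Applying Pila--Wilkie to $D$ gives a polynomial-in-$T$ upper bound for the number of algebraic parameter tuples of height at most $T$ lying outside a positive-dimensional semialgebraic block. Matching this against the Colmez--Tsimerman lower bound on the Galois orbit of a singular modulus of discriminant bounded polynomially in $T$ then forces the special constants to come from a finite set; the case of coordinates that are Hecke translates of the finitely many non-special points of $\Lambda_0$ is treated analogously, using height bounds on the relevant isogeny degree and Galois orbit bounds over the field of definition of $\Lambda_0$.

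The hardest step --- and where modular Ax--Schanuel enters --- is excluding positive-dimensional semialgebraic blocks in $D$. If such a block existed, it would supply algebraic relations among coordinates of points of $\mathbb{H}^n$ lying in $V^\uparrow$ that are not of weakly special shape; modular Ax--Schanuel would then force the projection to $\mathbb{A}^n$ to lie in a strictly larger weakly special subvariety of $V$ still containing the relevant point of $\Lambda$, contradicting the assumed maximality. The main technical obstacle will be orchestrating the three estimates --- Pila--Wilkie (upper bound), Colmez--Tsimerman (lower bound), and Ax--Schanuel (block elimination) --- uniformly over the finitely many combinatorial types $T$, and in particular handling constants that are Hecke translates of non-special $\Lambda_0$-points on the same footing as singular moduli despite the weaker arithmetic input available in that case.
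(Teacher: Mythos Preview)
The paper does not prove this theorem. Theorem~\ref{thm:modularmordell-lang} is stated as a known result, attributed to \cite{habegger-pila,pila:specialpts}, and used as a black box in the proof of Theorem~\ref{thm:mainj}. So there is no ``paper's own proof'' to compare your proposal against.

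That said, your sketch is a reasonable outline of the strategy actually used in the cited references: the Pila--Zannier method combining o-minimal definability of $j$ on a fundamental domain, Pila--Wilkie counting, Galois orbit lower bounds (via Masser--W\"ustholz isogeny estimates for the Hecke-orbit part and class number bounds for the CM part), and Ax--Lindemann/Ax--Schanuel to handle algebraic blocks. As a proof \emph{proposal} it is broadly on target, though of course many technical details are suppressed --- in particular the uniform treatment of the non-special points of $\Lambda_0$ and the precise formulation of the definable parametrising set require care that your sketch only gestures at. If the intent was to reconstruct the argument of \cite{habegger-pila,pila:specialpts}, you are on the right track; if the intent was to supply a proof where the present paper gives one, none is needed.
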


This theorem is stated for complex varieties, but the same statement holds by replacing $\mathbb{C}$ with any algebraically closed field $K$ of characteristic zero. 
Indeed, since the statement involves countably many parameters (the set $\Lambda$ is countable and so is the field of definition of $V$), we can find a countable algebraically closed subfield $K_0\subseteq K$ over which $V$ is defined and such that $\Lambda\subseteq\mathbb{A}_{K_0}^n$. 
Since the statement of modular Mordell--Lang is geometric, we may then embed $K_0$ into $\mathbb{C}$, and since the result holds over $\mathbb{C}$, it will hold over $K_0$ and hence also over $K$. 
The same is true about the following theorem. 

\begin{thm}[{{\cite[Theorem 1.8]{aslanyan:atypical}}}]
\label{thm:lambdaatypical}
    Let $V\subseteq\mathbb{A}^n_{\mathbb{C}}$ be an algebraic subvariety and let $\Lambda\subseteq\mathbb{A}^n_{\mathbb{C}}$ be a structure of finite Hecke rank. 
Then $V$ contains only finitely many maximal $\Lambda$-atypical components. 
\end{thm}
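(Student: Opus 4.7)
The plan is to adapt the Pila--Zannier/Pila--Scanlon framework to the $\Lambda$-setting, marrying the functional transcendence content of Ax--Schanuel for the $j$-function to the arithmetic content of Theorem~\ref{thm:modularmordell-lang}. The guiding idea is that any $\Lambda$-atypical component of $V$ absorbs ``atypicality'' from two distinct sources: a purely geodesic part controlled by Ax--Schanuel, and a purely arithmetic part controlled by $\Lambda$ via modular Mordell--Lang. The proof proceeds by separating these two contributions and bounding each of them in turn.

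The first step is to show, via Ax--Schanuel for $j$ together with a standard Noetherian/Ziegler-style argument, that $V$ has only finitely many \emph{geodesic-optimal} subvarieties, i.e.\ subvarieties $Y\subseteq V$ for which the geodesic defect $\dim\mathrm{wspcl}(Y)-\dim Y$ strictly decreases as $Y$ is enlarged inside $V$. The second step is a factorisation: given a maximal $\Lambda$-atypical component $X$ of $V$ with weakly special closure $T=\mathrm{wspcl}(X)$ (necessarily $\Lambda$-special, by definition of $\Lambda$-atypical), I would produce a geodesic-optimal $Y\subseteq V$ with $X\subseteq Y$ such that $X$ is an irreducible component of $Y\cap T$, and such that this intersection is $\Lambda$-special inside $\mathrm{wspcl}(Y)$. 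The third step is to apply Theorem~\ref{thm:modularmordell-lang} inside each of the finitely many geodesic-optimal $Y$'s: the trace of $\Lambda$ on $\mathrm{wspcl}(Y)$ remains a structure of finite Hecke rank (after projecting out the constant coordinates), so modular Mordell--Lang produces only finitely many maximal $\Lambda$-special subvarieties of $Y$. Summing over the finitely many $Y$'s then yields the desired finiteness of maximal $\Lambda$-atypical components of $V$.

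The hard part is the factorisation in the second step: one must verify both that every maximal $\Lambda$-atypical component of $V$ genuinely factors through some geodesic-optimal subvariety, and that maximality as a $\Lambda$-atypical component of $V$ transfers to maximality as a $\Lambda$-special subvariety inside the appropriate fibre. This requires careful accounting of defects of nested weakly special closures, together with a sufficiently strong application of Ax--Schanuel to rule out ``mixed'' scenarios in which the geodesic and arithmetic atypicality interact in an uncontrolled way. Once this bookkeeping is set up cleanly, the result is essentially a formal consequence of the finiteness statements in the two steps above.
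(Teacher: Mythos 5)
This statement appears in the paper only as a citation to \cite[Theorem 1.8]{aslanyan:atypical}; the paper does not reprove it, so there is no in-paper argument to compare against. Evaluating your proposal on its own merits, the high-level plan (finitely many geodesic-optimal subvarieties via Ax--Schanuel, then finiteness of the arithmetic part via modular Mordell--Lang) is indeed the right framework, and Step~1 is standard and sound.

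The problem is in Steps~2 and~3, and it is not just a matter of "setting up the bookkeeping cleanly." In Step~3 you apply Theorem~\ref{thm:modularmordell-lang} to $Y$ itself, concluding that $Y$ has finitely many maximal $\Lambda$-special subvarieties. But this is not the quantity you need to bound. If $X$ is a maximal $\Lambda$-atypical component with $T=\mathrm{wspcl}(X)$, then $X$ is (as you say, and as one can verify) an irreducible component of $Y\cap T$ for a suitable geodesic-optimal $Y\supseteq X$; however $T$ is generally \emph{not} contained in $Y$, only in $\mathrm{wspcl}(Y)$. The variety $X$ itself is also not $\Lambda$-special. So the finitely many maximal $\Lambda$-special subvarieties of $Y$ guaranteed by Mordell--Lang have nothing a priori to do with the family of $T$'s (or $X$'s) you are trying to control. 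There can be infinitely many $\Lambda$-special $T\subseteq\mathrm{wspcl}(Y)$ meeting $Y$, producing infinitely many components $Y\cap T$, and your argument does not exclude the possibility that infinitely many of them are maximal $\Lambda$-atypical in $V$.

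The actual Mordell--Lang input has to be aimed at a \emph{projection}. Concretely: after fixing the geodesic-optimal $Y$ and the combinatorial type of $T$ inside $T_Y:=\mathrm{wspcl}(Y)$ (the coarsening of the partition and which blocks become fixed, of which there are finitely many choices), one records the values on the newly fixed blocks via a projection $\pi:T_Y\to\mathbb{A}^{\ell}$, so that $T=\pi^{-1}(\mathbf{c})\cap T_Y$ with $\mathbf{c}$ a $\Lambda$-special point. The relevant application of modular Mordell--Lang is then to the image $\pi(Y)\subseteq\mathbb{A}^{\ell}$: if $\pi(Y)$ contained a Zariski-dense set of $\Lambda$-special points, Theorem~\ref{thm:modularmordell-lang} would force some positive-dimensional $\Lambda$-special $W\subseteq\pi(Y)$, and then $Y\cap\pi^{-1}(W)$ would be a strictly larger $\Lambda$-atypical subvariety containing $X$, contradicting maximality. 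So maximality of $X$ forces $\mathbf{c}$ to come from a finite set. This is the step that is missing from your outline: Mordell--Lang must be invoked for $\pi(Y)$, not for $Y$, and the passage from "infinitely many $\mathbf{c}$" to "$X$ not maximal" requires checking that enlarging $T$ by a positive-dimensional $\Lambda$-special direction preserves $\Lambda$-atypicality, which in turn depends on a defect computation you have not carried out. Without this, the proof does not close.
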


Results of this type are closely related to another family of conjectures, usually referred to as the Andr\'e--Pink--Zannier conjectures (which are special cases of the Zilber--Pink conjectures). 
The precise connection between Andr\'e--Oort, Andr\'e--Pink--Zannier and Mordell--Lang for Shimura varieties is given in \cite[Theorem 3.10]{aslanyan-daw}.
Various cases of Andr\'e--Pink--Zannier have been established thanks to the work of Richard and Yafaev \cite{richard-yafaev:generalizedAPZ}, see also \cite[\S1.2]{richard-yafaev:heightfunctions} for a history of the problem and the contributions from other important figures.

\subsection{Modular Ax--Schanuel}
\label{subsec:ax-schanuel}
The key ingredient in the proof of our main result is the differential formulation of the modular Ax--Schanuel theorem, first proven by Pila and Tsimerman \cite{pila-tsimerman:ax-schanuel}. 

It is well-known (e.g.~\cite[p.~20]{masser}) that $j$ satisfies the following algebraic differential equation (and, by a result of Mahler \cite{mahler}, none of lower order):
\begin{equation}
    \label{eq:j}
    0 = \frac{j'''}{j'} - \frac{3}{2}\left(\frac{j''}{j'}\right)^{2} + \frac{j^{2} -1968j + 2654208}{j^{2}(j-1728)^{2}}\left(j'\right)^{2}.
\end{equation}
Define the rational function
\[\Psi\left(Y_0,Y_1,Y_2,Y_3\right):= \frac{Y_3}{Y_1} - \frac{3}{2}\left(\frac{Y_2}{Y_1}\right)^{2} + \frac{Y_0^{2} -1968Y_0 + 2654208}{Y_0^{2}(Y_0-1728)^{2}}Y_1^{2}.\]
Then equation (\ref{eq:j}) can be rewritten as $\Psi(j,j',j'',j''') =0$.

\begin{thm}[{{\cite[Theorem 1.3]{pila-tsimerman:ax-schanuel}}}]
\label{thm:ax-schanuel}
    Let $(K,\Delta)$ be a differential field of characteristic zero, where $\Delta:=\{\partial_1,\ldots,\partial_m\}$ is a finite set of commuting derivations on $K$. 
    Let $C:=\bigcap_{i=1}^{m}\ker\partial_i$ denote the constant field of $K$. 
    Suppose $z_1,j_1,j_1',j_1'',j_1'''\ldots,z_n,j_n,j_n',j_n'',j_n'''\in K\setminus C$ satisfy that for every $i\in\{1,\ldots,n\}$ and every $k\in\{1,\ldots,m\}$
    \[\partial_k j_i = j_i'\partial_k z_i, \qquad \partial_k j_i' = j_i''\partial_k z_i, \quad \mbox{ and } \quad \partial_k j_i'' = j_i'''\partial_k z_i.\]
    Suppose further that for all $i\in\{1,\ldots,n\}$ we have $\Psi(j_i, j_i', j_i'', j_i''')=0$ and that for every distinct $i,k\in\{1,\ldots,n\}$ and every $N\in\mathbb{N}^+$ we have $\Phi_N(j_i,j_k)\neq 0$.
    Then 
    \[\mathrm{tr.deg.}_CC(z_1,j_1,j_1',j_1'',\ldots,z_n,j_n,j_n',j_n'')\geq 3n + \mathrm{rank}(\partial_{k}z_i)_{i,k}.\]
\end{thm}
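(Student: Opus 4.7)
I argue by contradiction: assume the unlikely locus $U \coloneqq \bigcup_{S \in \mathscr{S}_n^{n-(d+1)}} V\cap S$ is Zariski dense in $V$, and derive a contradiction by combining the combinatorial structure of special subvarieties with Andr\'e--Oort (Theorem~\ref{thm:andre-oort}), modular Ax--Schanuel (Theorem~\ref{thm:ax-schanuel}), and modular Mordell--Lang (Theorem~\ref{thm:modularmordell-lang}), using the non-definability hypothesis as the transcendence input.

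The first step is combinatorial: every special subvariety of $\mathrm{Y}(1)^n$ has a combinatorial type, namely a partition of $\{1,\ldots,n\}$ into free blocks (coordinates tied together by modular polynomials, each contributing dimension $1$) and constant blocks (coordinates fixed to specific special values, contributing dimension $0$), together with the specification of which blocks are which. There are finitely many such types, so by pigeonhole I may reduce to assuming that all special subvarieties contributing to $U$ share a common type. Let $I_c \subseteq \{1, \ldots, n\}$ denote the constant index set, $I_f = \{1, \ldots, n\} \setminus I_c$ the free indices, and $a_1, \ldots, a_c$ the sizes of the free blocks, where $c \leq n-(d+1)$. Then the dimension count gives $\sum_i (a_i - 1) + |I_c| \geq d+1$.

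Suppose first that $|I_c| \geq d+1$. Since each contributing $S$ has the $I_c$-coordinates fixed to special values, the Zariski density of $U$ forces $\mathrm{pr}_{I_c}(V) \subseteq \mathrm{Y}(1)^{|I_c|}$ to contain a Zariski-dense set of special points. Since $\dim \mathrm{pr}_{I_c}(V) \leq d < |I_c|$, this image is a proper subvariety of $\mathrm{Y}(1)^{|I_c|}$, and Theorem~\ref{thm:andre-oort} forces it to be a union of proper special subvarieties, in particular defined over $\overline{\mathbb{Q}}$. If $|I_c| \geq d+2$, projecting further to any $(d+2)$-subset of $I_c$ yields a projection of $V$ defined over $\overline{\mathbb{Q}}$, contradicting the hypothesis. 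If $|I_c| = d+1$, then $V$ is contained in $\mathrm{pr}_{I_c}^{-1}(\mathrm{pr}_{I_c}(V))$, which is a proper special subvariety of $\mathrm{Y}(1)^n$, contradicting that $V$ is Hodge-generic.

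Suppose instead $|I_c| \leq d$. Then $\sum_i (a_i - 1) \geq 1$, so each contributing $S$ imposes non-trivial modular polynomial relations among the free coordinates. Here I pass to a differentially closed field $(K, \Delta)$ whose field of constants contains a field of definition of $V$, take a generic point $\boldsymbol{\xi} \in V(K)$, and introduce pre-images $z_i$ of $\xi_i$ under $j$ together with higher derivatives satisfying $\Psi(j_i, j_i', j_i'', j_i''') = 0$. Since $V$ is Hodge-generic, the coordinates $\xi_1, \ldots, \xi_n$ are pairwise modularly independent, so Theorem~\ref{thm:ax-schanuel} applies and gives a transcendence lower bound in terms of $\mathrm{rank}(\partial_k z_i)$. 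The non-definability hypothesis provides $\mathrm{tr.deg.}_{\overline{\mathbb{Q}}}\, \overline{\mathbb{Q}}(\mathrm{pr}_{\mathbf{i}}(\boldsymbol{\xi})) \geq d+1$ for every $(d+2)$-index set $\mathbf{i}$, ensuring this rank is large enough to be useful. I then apply Theorem~\ref{thm:modularmordell-lang} with a structure of finite Hecke rank $\Lambda$ capturing the Hecke orbits of constant values occurring in $I_c$: this organizes the densely many unlikely intersections into a finite list of maximal $\Lambda$-special subvarieties of $V$, and the modular relations imposed by these subvarieties on specializations of $\boldsymbol{\xi}$ contradict the Ax--Schanuel transcendence bound. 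The hardest step is this final case: one must construct $\Lambda$ so that the constant values across the Zariski-dense family really do lie in finitely many Hecke orbits (a reduction that has to be extracted from the combinatorial structure of the type), and then use the model theory of differentially closed fields (\S\ref{subsec:dcf}) to turn the Ax--Schanuel bound into an actual contradiction with the existence of the dense family. This interplay between Ax--Schanuel and Mordell--Lang, mediated by the non-definability hypothesis, is the technical heart of the Pila--Scanlon strategy as adapted to the modular setting.
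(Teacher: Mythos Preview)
Your proposal does not address the stated theorem. The statement you were asked to prove is the modular Ax--Schanuel theorem (Theorem~\ref{thm:ax-schanuel}), a transcendence-degree lower bound for solutions of the differential equation of the $j$-function in an abstract differential field. This result is \emph{cited} in the paper from \cite{pila-tsimerman:ax-schanuel} and is not proved there; the paper uses it only as a black-box functional-transcendence input.

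What you have written instead is a proof sketch for Theorem~\ref{thm:genericmzp}, the paper's main theorem. Your opening line assumes for contradiction that the unlikely locus of $V$ is Zariski dense in $V$, which is the conclusion of Theorem~\ref{thm:genericmzp}, not of Theorem~\ref{thm:ax-schanuel}; and you explicitly invoke Theorem~\ref{thm:ax-schanuel} as a tool midway through your own argument. As a proof of the stated result the proposal is therefore circular (it assumes what it is meant to establish) and off-target (it argues toward an entirely different conclusion).

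If your intent was really to sketch a proof of Theorem~\ref{thm:genericmzp}, the outline is in the right spirit---reduce to finitely many combinatorial types of special subvarieties, separate cases by the number of constant coordinates, and combine Ax--Schanuel with Mordell--Lang inside a differentially closed field---but it diverges from the paper's argument in several places. The paper does not invoke Andr\'e--Oort directly in the large-constant-index case; it handles fixed coordinates by an induction that reduces to Theorem~\ref{thm:mainj} and then applies Mordell--Lang. And the crucial second case in your sketch is where all the actual work lies: the paper makes the differential set $\Omega_{(\mathscr{P},\mathscr{P}_0),\nu,P_0'}(V)$ explicit, runs a Morley sequence through it, and uses Ax--Schanuel on the sequence to obtain inequalities valid for all $m$, which is what forces the dimension count. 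Your paragraph gestures at this but supplies none of the mechanism. In any case, none of this bears on proving Ax--Schanuel itself.
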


\subsection{Some model theory of differential fields}
\label{subsec:dcf}

The proof of Theorem \ref{thm:genericmzp} will use some notions of differentially closed fields, in particular we will refer to the notion of \emph{generic} elements and \emph{Morley sequences}. 
In this section we recall definitions and properties we will need.
For background on this subject, we refer the reader to \cite{marker:modeltheorydcf}.

Throughout, let $(K,\partial)$ denote a differential field of characteristic zero.
This differential field is said to be \emph{differentially closed} if for every $n\in\mathbb{N}$ and every pair of polynomials $f\in K[X_0,\ldots,X_n]\setminus K[X_0,\ldots,X_{n-1}]$ and $g\in K[X_0,\ldots,X_{n-1}]\setminus\{0\}$ there is $a\in K$ such that 
\[f(a,\partial(a),\partial^2(a),\ldots,\partial^n(a))=0\quad\mbox{ and }\quad g(a,\partial(a),\ldots,\partial^{n-1}(a))\neq 0.\]
In particular, $K$ is an algebraically closed field. 
Furthermore, the set $\{x\in K : \partial(x) = 0\} = \ker\partial$, usually called the \emph{field of constants} of $K$, is also an algebraically closed field. 

Just like every field has an algebraic closure, every differential field $(K_0,\partial_0)$ can be embedded into a ``smallest'' differentially closed field $(K,\partial)$ called the \emph{differential closure} of $(K_0,\partial_0)$. 
In this case the field of constants $C_K=\ker\partial$ of $K$ is an algebraic extension of the field of constants $C_{K_0}=\ker\partial_0$ of $K_0$ \cite[Lemma 2.11]{marker:modeltheorydcf}. 

In analogy with the definition of the Zariski topology, given a positive integer $m$, we say that a subset $X\subseteq K^m$ is \emph{Kolchin closed} if $X$ is the solution set of a finite system of differential equations in $m$-variables. 
This defines the \emph{Kolchin topology} on $K^m$. 
By the Ritt--Raudenbush basis theorem \cite[Theorem 1.16]{marker:modeltheorydcf}, the Kolchin topology is Noetherian.

There is also an analogous version of Chevalley's theorem for constructible sets: the theory of differentially closed fields of characteristic zero exhibits quantifier elimination \cite[Theorem 2.4]{marker:modeltheorydcf}, which is reflected in the fact that any coordinate projection $K^m\to K^\ell$ (with $0<\ell\leq m$) of any Boolean combination of Kolchin closed subsets of $K^m$ is a Boolean combination of Kolchin closed subsets of $K^\ell$.

Now we turn our attention to the notion of generic elements of irreducible subsets of $K^m$. 
When working with an irreducible affine complex algebraic variety $W\subseteq\mathbb{C}^m$, using Notherianity of the Zariski topology we can find an algebraically closed field $E\subseteq\mathbb{C}$ such that $W$ is defined over $E$ and $\mathrm{tr.deg.}_{\mathbb{Q}}E$ is finite.
We say that a point $\mathbf{w}\in W$ is \emph{generic in $W$ over $E$} if 
\[\mathrm{tr.deg.}_EE(\mathbf{w}) = \dim W.\]
This notion of genericity can be rephrased in the following way: $\mathbf{w}$ is an element of $W$ which is not contained in any proper Zariski closed subset of $W$ which is also defined over $E$. 
The fact that generic elements always exist with respect to a field $E$ as above is a consequence of the presence of uncountably many algebraically independent elements in $\mathbb{C}$; in model theoretic terms we say that $\mathbb{C}$ is a \emph{saturated} model of the theory of algebraically closed fields of characteristic zero.
Because of this, we can produce a sequence $\{\mathbf{w}_i\}_{i\in\mathbb{N}}$ such that for every $i\in\mathbb{N}$, $\mathbf{w}_{i}$ is generic in $W$ over $\overline{E(\mathbf{w}_0,\ldots,\mathbf{w}_{i-1})}$. 
Equivalently, for every $n\in\mathbb{N}$ the tuple $(\mathbf{w}_0,\ldots,\mathbf{w}_n)$ is generic in $W^{n+1}$ over $E$. 
Such a sequence of generic elements is called a \emph{Morley sequence in $W$ over $E$} (for the theory of algebraically closed fields).

In the context of differentially closed fields, given an irreducible Kolchin closed set $V\subseteq K^m$ we first take a differential subfield $F\subseteq K$ so that $V$ is defined by differential equations with coefficients in $F$, and we say that an element $\mathbf{v}\in V$ is \emph{generic in $V$ over $F$} (with respect to the Kolchin topology) if $\mathbf{v}$ is not contained in any proper Kolchin closed subset of $V$ defined over $F$. 
By Notherianity of the Kolchin topology, we may assume that $F$ is finitely generated (as a differential field) over $C=\ker\partial$.
A \emph{Morley sequence in $V$ over $F$} is a sequence $\{\mathbf{v}_i\}_{i\in\mathbb{N}}$ of elements of $V$ such that for every $n\in\mathbb{N}$, the tuple $(\mathbf{v}_0,\ldots,\mathbf{v}_n)$ is generic in $V^{n+1}$ over $F$.
The existence of generic elements (and hence of Morley sequences) is guaranteed if we further assume that $(K,\partial)$ is a saturated model of the theory of differentially closed fields (it is a standard result of model theory that every model of any theory can be embedded into a sufficiently saturated model of that theory).

\section{Main Results} 
\label{sec:main}

\subsection{Partitions}
\label{subsec:partitions}
Let $K$ denote an algebraically closed field of characteristic zero.
In this section we explain how we will use partitions of an integer $n$ to code the dimension of weakly special subvarieties of $\mathbb{A}^n_{K}$. 
\begin{defn}
Let $n$ denote a positive integer.
A \emph{partition of $n$} is a set $\mathscr{P}$ of non-empty subsets of $\{1,\ldots,n\}$ satisfying the following conditions
\begin{enumerate}[(a)]
    \item $\bigcup\mathscr{P} = \{1,\ldots,n\}$, and
    \item for every $P_1,P_2\in\mathscr{P}$, $P_1\neq P_2\implies P_1\cap P_2=\emptyset$.
\end{enumerate} 
\end{defn}

\begin{defn} 
Let $\mathscr{P}$ be a partition  of $n$ and let $\mathscr{P}_0$ denote a subset of $\mathscr{P}$ (possibly empty).
Given a weakly special subvariety $S\subseteq\mathbb{A}^n_K$, we say that $S$ has \emph{special type} $(\mathscr{P},\mathscr{P}_0)$ if the following conditions are met:
\begin{enumerate}[(a)]
    \item For all $i,k\in\{1,\ldots,n\}$ distinct, the coordinates $X_i$ and $X_k$ are unfixed and modularly dependent on $S$ if and only if there is $P\in\mathscr{P}$ such that $i,k\in P$.
    
    \item for all $i\in\{1,\ldots,n\}$, the coordinate $X_i$ has a fixed value on $S$ if and only if there is $P\in\mathscr{P}_0$ such that $i\in P$. 
\end{enumerate}
We denote that $S$ has special type $\mathscr{P}$ by $S\sim_s(\mathscr{P},\mathscr{P}_0)$.
Note that since we have not imposed restrictions on how the set $\mathscr{P}_0$ partitions the set of indices of coordinates with fixed value on $S$, a weakly special variety does not determine a unique special type; however the dimension of $S$ is determined by its special type
\[\dim S = |\mathscr{P}| - |\mathscr{P}_0|.\]
Indeed, the quantity $|\mathscr{P}| - |\mathscr{P}_0|$ counts the number of modularly independent variables on $S$ whose value is not fixed on $S$. 
\end{defn}

We remark that if $S\sim_s(\mathscr{P},\mathscr{P}_0)$ and $i\in\{1,\ldots,n\}$ is such that $\{i\}\in\mathscr{P}$, then the coordinate $X_i$ either has fixed value on $S$ (if $\{i\}\in\mathscr{P}_0$) or is modularly independent from all other coordinates. 

\begin{defn}
    Let $(K,\partial)$ denote a differential field of characteristic zero and let $C$ denote the field of constants, that is $C=\ker\partial$. 
    Given $\mathbf{x}\in \mathbb{A}_K^n$, then the action of $\mathrm{GL}_2(C)$ on $\mathbb{P}^1_K=K\cup\{\infty\}$ given by:
    \[\begin{pmatrix}
        a & b\\
        c& d
    \end{pmatrix}z\coloneqq\frac{az+b}{cz+d}\]
    defines a partition $\mathscr{P}$ of $n$ in the following way: for every $P\in\mathscr{P}$ and every $i,k\in\{1,\ldots,n\}$ we have that $i,k\in P$ if and only if $x_i$ and $x_k$ are in the same $\mathrm{GL}_2(C)$-orbit.
    If $\mathscr{P}$ is the partition defined from $\mathbf{x}$ following these rules, then we say that $\mathbf{x}$ is of \emph{M\"obius type} $\mathscr{P}$, and we denote this by $\mathbf{x}\sim_m\mathscr{P}$.
    We remark that, given a partition $\mathscr{P}$ of $n$, the set of elements of $\mathbb{A}_K^n$ of M\"obius type $\mathscr{P}$ is definable in the language of differential fields. 
\end{defn}

\subsection{The unlikely locus in differential fields}
\label{subsec:diffunlikely}
One of the usual obstacles when working on problems of unlikely intersections is that the collection of special subvarieties of $\mathbb{A}^n_\mathbb{C}$ does not form part of an algebraic family of varieties, the problem being that modular polynomials have unbounded degree. 

A standard way of circumventing this issue is by using the $j$-function to move the problem to the upper half-plane $\mathbb{H}$, since modular relations in $\mathbb{A}^n_{\mathbb{C}}$ can be interpreted as $\mathrm{GL}_2(\mathbb{Q})^+$ relations on $\mathbb{H}^n$. 
Since $\mathrm{GL}_2(\mathbb{Q})^+\subseteq\mathrm{GL}_2(\mathbb{R})\subseteq\mathrm{GL}_2(\mathbb{C})$, one can use the actions from these algebraic groups to produce convenient families to work with.

We now explain how to build convenient families in the context of differentially closed fields. 
Consider a differentially closed field $(K,\partial)$ of characteristic zero with field of constants $C := \ker\partial$, and let $V\subseteq\mathbb{A}^n_K$ be an algebraic variety.
Let $F$ denote an algebraically closed subfield of $K$ satisfying that $V$ is defined over $F$, $C\subseteq F$ and $\mathrm{tr.deg.}_CF$ is finite (such an $F$ always exists). 

We remark that since $K$ is differentially closed, then $C$ is algebraically closed, and so every special variety (including special points) is defined over $C$. 

Recall that the unlikely locus of $V$ (which we now denote $\Upsilon(V)$) was defined in \S\ref{subsec:mzp} as
\[\Upsilon(V) \coloneqq \bigcup_{S\in\mathscr{S}_n^{n-\dim V-1}} (V\cap S).\]
We will be using the modular Mordell--Lang theorem in order to prove Theorem \ref{thm:genericmzp}, so we will also need to consider the intersection of $V$ with $\Lambda$-special varieties. 
Let $\Lambda_0\subseteq F$ be a finite modularly independent subset of $K$ which does not include singular moduli and set $\Lambda = \mathrm{He}(\Lambda_0\cup \mathscr{S}_1^0(K))^n$. 
Given $k\leq n$, we denote by $\mathscr{S}^k_{\Lambda}$ the set of all $\Lambda$-special subvarieties $S$ of $\mathbb{A}^n_K$ satisfying: $\dim S\leq k$.
So we can denote the $\Lambda$-unlikely locus of $V$ as
\[\Upsilon_\Lambda(V) \coloneqq \bigcup_{S\in\mathscr{S}_\Lambda^{n-\dim V-1}} (V\cap S).\]

For every $S\in\mathscr{S}_\Lambda^{n-\dim V-1}$ there is a partition $\mathscr{P}$ of $n$ and a subset $\mathscr{P}_0\subseteq\mathscr{P}$ such that $S\sim_s(\mathscr{P},\mathscr{P}_0)$. 
Furthermore, since we are working in a differential field, any point $\mathbf{x}\in V\cap S$ can be separated as $\mathbf{x}=(\mathbf{a},\mathbf{c})$, where the entries of $\mathbf{c}$ are all in $C$ and no entry of $\mathbf{a}$ is in $C$. 
We can then see $\Upsilon_\Lambda(V)$ as the union of sets of the form
\[\Upsilon_{\Lambda,(\mathscr{P},\mathscr{P}_0),\nu}(V)\coloneqq\bigcup_{S\in\mathscr{S}^{n-d-1}_{\Lambda}}\left\{(\mathbf{a},\mathbf{c})\in(K\setminus C)^{n-\nu}\times C^{\nu} : \left[(\mathbf{a},\mathbf{c})\in V\cap S\right] \wedge \left[S\sim_s(\mathscr{P},\mathscr{P}_0)\right]\right\}.\]

The set $\Upsilon_{\Lambda,(\mathscr{P},\mathscr{P}_0),\nu}(V)$ is still not useful as we are still taking unions over special varieties of unbounded degree, so in view of the comment made earlier, we need to ``move'' the problem to $\mathbb{H}$.
The way to do that in a differentially closed field (where there is no analytic topology, and hence, no notion of upper half-plane) is to use the differential equation (\ref{eq:j}).
Consider the following function on pairs of elements of $K\setminus C$:
    \[\psi(x,y)\coloneqq\Psi\left(y, \partial_x y, \partial_x^2 y, \partial_x^3 y\right),\]
    where $\partial_x:= \frac{\partial}{\partial x}$ and $\Psi$ is the rational function defined in \S\ref{subsec:ax-schanuel}.
    Now we consider the set 
    \[\Xi\coloneqq\{(x,y)\in(K\setminus C)^2 :\psi(x,y)=0\}.\]
    A point $(x,y)\in\Xi$ can be informally thought of as $y$ representing the $j$-image of $x$ (a more accurate interpretation of the points of $\Xi$ can be framed in terms of ``blurrings'' as defined in \cite{aslanyan-kirby}).
    By \cite[Theorem 1.1]{aslanyan:strongminj} and \cite{freitag-scanlon:strongminj} we know that for every $y_0\in K\setminus C$ the fibre in $\Xi$ above $y_0$ in non-empty (because $K$ is differentially closed) and consists of a whole $\mathrm{GL}_2(C)$-orbit, whereas for each $x_0\in K\setminus C$ the fibre
    \[\{y\in K\setminus C : \psi(x_0,y)=0\}\subseteq\Xi\]
    is strongly minimal and has trivial pregeometry.

Let $S$ denote a $\Lambda$-special variety with $S\sim_s(\mathscr{P},\mathscr{P}_0)$.
Then for every $i\in P_0:=\bigcup\mathscr{P}_0$, the coordinate $X_i$ will have a fixed value on $S$, but this value need not be in $C$, instead it can be a value in the Hecke orbit of some element of $\Lambda_0$.
This distinction will be significant in future computations, so we let $P_0'\subseteq P_0$ denote a subset which will be thought of as those coordinates which have fixed value on $S$ and the value is in $\mathrm{He}(\Lambda_0)$. 
We now define
    \begin{align*}
        &\Omega_{(\mathscr{P},\mathscr{P}_0),\nu,P_0'}(V)\coloneqq\left\{(\mathbf{a},\mathbf{c})\in(K\setminus C)^{n-\nu}\times C^{\nu} :
        (\mathbf{a},\mathbf{c})\in V \wedge\exists\boldsymbol{\tau}\in K^{n} \right.\\ 
        &\left.\left[ 
\bigwedge_{i=1}^{n-\nu}(\tau_i, a_i)\in\Xi \wedge \bigwedge_{i=n-\nu+1}^n\tau_i\in C \wedge  \boldsymbol{\tau}\sim_m \mathscr{P}\wedge \left(\bigwedge_{i=1}^{n-\nu}i\in P_0'\implies \exists g\in\mathrm{GL_2}(C)(g\tau_i\in\Lambda_0)\right) \right]\right\}.
    \end{align*}
    This definition we gave shows that $\Omega_{(\mathscr{P},\mathscr{P}_0),\nu,P_0'}(V)$ is definable in the language of differential fields, with parameters in $F$. 
    Also observe that 
    \[\Upsilon_{\Lambda,(\mathscr{P},\mathscr{P}_0),\nu}(V)\subseteq\bigcup_{P_0'\subseteq P_0}\Omega_{(\mathscr{P},\mathscr{P}_0),\nu,P_0'}(V).\]
The \emph{differential version of the $\Lambda$-unlikely locus of $V$}, denoted $\Omega_\Lambda(V)$, is the union 
\[\Omega_\Lambda(V)\coloneqq\bigcup_{\substack{(\mathscr{P},\mathscr{P}_0) \\ |\mathscr{P}|-|\mathscr{P}_0| < n-\dim V}}\bigcup_{P_0'\subseteq P_0}\,\bigcup_{\nu=|P_0|-|P_0'| }^{n}\Omega_{(\mathscr{P},\mathscr{P}_0),\nu,P_0'}(V).\]
A more geometric description of $\Omega_\Lambda(V)$ is the following: let $\mathscr{W}^{k}_\Lambda$ denote the set of all weakly special subvarieties of $\mathbb{A}^n_K$ which contain a point of $\mathrm{He}(\Lambda_0\cup C)^n$, then
\[\Omega_\Lambda(V) = \bigcup_{S\in\mathscr{W}^{n-\dim V-1}_\Lambda}(V\cap S).\]

Going back to original unlikely locus of $V$, when $\Lambda_0=\emptyset$ we may simplify the above definitions to only consider the case $P_0'=\emptyset$, and so we define the \emph{differential version of the unlikely locus of $V$} as
\[\Omega(V)\coloneqq\bigcup_{\substack{(\mathscr{P},\mathscr{P}_0) \\ |\mathscr{P}|-|\mathscr{P}_0| < n-\dim V}}\bigcup_{\nu=|P_0| }^{n}\Omega_{(\mathscr{P},\mathscr{P}_0),\nu,\emptyset}(V).\]
Finally, we remark that if $\boldsymbol{\tau}\in K^n$ is a tuple witnessing the presence of $(\mathbf{a},\mathbf{c})$ in $\Omega_{(\mathscr{P},\mathscr{P}_0),\nu,P_0'}(V)$, then
    \begin{equation}
        \label{eq:tauupperbound}
        \mathrm{tr.deg.}_FF(\boldsymbol{\tau})\leq|\mathscr{P}| - |\mathscr{P}_0|,
    \end{equation}
    because every coordinate of $\boldsymbol{\tau}$ which belongs to $P_0$ is in the same $\mathrm{GL}_2(C)$-orbit as some element of $F$.

\subsection{Proving  the main theorem}
\label{subsec:proofmainthm}
The main step in proving Theorem \ref{thm:genericmzp} is given by the following theorem. 
As we will see later, modular Mordell--Lang allows us to reduce the proof of Theorem \ref{thm:genericmzp} to the case where no coordinate has a fixed value on $V$, but at the cost of showing non-Zariski density of $\Upsilon_\Lambda(V)$, for any structure $\Lambda$ of finite Hecke rank.

\begin{thm}
\label{thm:mainj}
    Let $V\subseteq\mathbb{A}^n_{\mathbb{C}}$ be an algebraic variety where no coordinate is fixed in value, and let $\Lambda\subseteq\mathbb{A}^n_{\mathbb{C}}$ be a structure of finite Hecke rank. 
    Let $d = \dim V$. 
    Suppose that for every coordinate projection $\mathrm{pr}:\mathbb{A}^n_{\mathbb{C}}\to\mathbb{A}^{d+2}_{\mathbb{C}}$, $\mathrm{pr}(V)$ is not definable over $\overline{\mathbb{Q}}$. 
    If $V$ is Hodge-generic, then the $\Lambda$-unlikely locus of $V$ is not Zariski dense in $V$. 
\end{thm}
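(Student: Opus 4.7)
\emph{Proof plan.} The plan is to argue by contradiction and move to the setting of a sufficiently saturated differentially closed field, in the spirit of the Pila--Scanlon strategy adapted to the differential locus $\Omega_\Lambda(V)$ of \S\ref{subsec:diffunlikely}. Suppose $\Upsilon_\Lambda(V)$ is Zariski dense in $V$. I would work inside a sufficiently saturated differentially closed field $(K,\partial)$ extending $\mathbb{C}$, with constant field $C$, and fix a finitely generated algebraically closed subfield $F\subseteq K$ containing $C$, $\Lambda_0$, and a field of definition of $V$. Since $\Upsilon_\Lambda(V)\subseteq\Omega_\Lambda(V)$ is Zariski dense in $V$ and $\Omega_\Lambda(V)$ decomposes as a finite union of the definable sets $\Omega_{(\mathscr{P},\mathscr{P}_0),\nu,P_0'}(V)$, one such piece is also Zariski dense; by saturation I would pick $\mathbf{x}$ in this piece that is Zariski-generic in $V$ over $F$. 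Because no coordinate is fixed on $V$, every coordinate projection of $V$ to $\mathbb{A}^1$ is dominant, so $\mathbf{x}$ has no coordinate in $F$, hence none in $C\subseteq F$; this forces $\nu=0$, so $\mathbf{x}=\mathbf{a}\in(K\setminus C)^n$. Setting $m:=|\mathscr{P}|-|\mathscr{P}_0|$, we have $m\le n-d-1$, and a witness tuple $\boldsymbol{\tau}$ satisfies $(\tau_i,a_i)\in\Xi$ for every $i$, $\boldsymbol{\tau}\sim_m\mathscr{P}$, and $\mathrm{tr.deg.}_F F(\boldsymbol{\tau})\le m$ by~\eqref{eq:tauupperbound}.

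The next step is to invoke modular Ax--Schanuel. Picking one representative $i_p$ from each block of $\mathscr{P}\setminus\mathscr{P}_0$ gives $m$ indices whose $\tau_{i_p}$'s lie in pairwise distinct $\mathrm{GL}_2(C)$-orbits; any relation $\Phi_N(a_{i_p},a_{i_q})=0$ would force the corresponding $\tau$'s into a common $\mathrm{GL}_2^+(\mathbb{Q})\subseteq\mathrm{GL}_2(C)$-orbit, a contradiction. Hence $a_{i_1},\ldots,a_{i_m}$ are pairwise modularly independent. Writing $j'_p:=\partial(a_{i_p})/\partial(\tau_{i_p})$ and $j''_p:=\partial(j'_p)/\partial(\tau_{i_p})$, \cref{thm:ax-schanuel} applied to the $(\tau_{i_p},a_{i_p},j'_p,j''_p)_{p=1}^m$ yields
\[
\mathrm{tr.deg.}_C C\bigl(\tau_{i_1},a_{i_1},j'_1,j''_1,\ldots,\tau_{i_m},a_{i_m},j'_m,j''_m\bigr)\;\ge\;3m+1.
\]

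For the contradiction I would combine this lower bound with the upper bound $\mathrm{tr.deg.}_F F(\boldsymbol{\tau})\le m$, the generic estimate $\mathrm{tr.deg.}_F F(\mathbf{a})=d$, and the differential-algebraic relations expressing the jet coordinates $j'_p,j''_p$ in terms of $\mathbf{a}$, $\boldsymbol{\tau}$ and $\partial$. The excess afforded by the ``$+1$'' in Ax--Schanuel, together with the fact that the $m\le n-d-1$ representative indices can be extended to a set of $d+2$ coordinates, should yield a non-trivial algebraic constraint on a $(d+2)$-projection of the generic point $\mathbf{a}$. Since $\mathbf{a}$ is generic in $V$ over $F$, this constraint must already cut out $\mathrm{pr}(V)$ and, being produced from quantities defined over $C\cap\overline{\mathbb{Q}}$, must be defined over $\overline{\mathbb{Q}}$, contradicting the hypothesis that no such projection of $V$ is definable over $\overline{\mathbb{Q}}$. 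Finally, the passage back from $\Omega_\Lambda$ to $\Upsilon_\Lambda$, and the treatment of $\Lambda$-special subvarieties with fixed-value coordinates, would be handled via \cref{thm:modularmordell-lang} and \cref{thm:lambdaatypical}.

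I expect the main obstacle to be this last transcendence count. The Ax--Schanuel inequality is an estimate over $C$, whereas the generic dimension of $\mathbf{a}$ and the bound on $\boldsymbol{\tau}$ are relative to $F\supseteq C$, and the jet coordinates $j'_p,j''_p$ interpolate between the two. Pinning down, from these data, the particular $(d+2)$-coordinate projection on which an arithmetic constraint materializes, and thereby triggering the non-definability hypothesis, is the delicate step and the point where the ``$d+2$''--projection condition is genuinely used rather than merely invoked.
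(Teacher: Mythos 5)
Your proposal reproduces the framework (saturated differentially closed field, the definable sets $\Omega_{(\mathscr{P},\mathscr{P}_0),\nu,P_0'}(V)$, Ax--Schanuel), but the central step --- forcing $\nu=0$ --- does not work, and this is not a minor technicality: it is precisely in the $\nu>0$ regime that the $(d+2)$-projection hypothesis is actually used, so your plan inverts the logic of the argument.

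The gap is the claim ``by saturation I would pick $\mathbf{x}$ in this piece that is Zariski-generic in $V$ over $F$.'' In a saturated model $K\models\mathrm{DCF}_0$ the constant field $C$ has the same cardinality as $K$, and since by construction $C\subseteq F$, one has $|F|=|K|$; thus $K$ is \emph{not} $|F|^+$-saturated, and you cannot realize the type over $F$ demanding simultaneously ``$\mathbf{x}\in\Omega_{(\mathscr{P},\mathscr{P}_0),\nu,P_0'}(V)$'' and ``$\mathbf{x}$ avoids every proper $F$-Zariski-closed subset of $V$.'' Indeed such a type is inconsistent when $\nu>0$: a point of $\Omega_{(\mathscr{P},\mathscr{P}_0),\nu,P_0'}(V)$ has $\nu$ coordinates in $C\subseteq F$ by definition, while a Zariski-generic point of $V$ over $F$ has no coordinate in $F$ because no coordinate is constant on $V$. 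Concretely, $\Omega_{(\mathscr{P},\mathscr{P}_0),\nu,P_0'}(V)$ with $\nu>0$ \emph{can} be Zariski dense in $V$ (e.g.\ think of a line $x+y=t$ and the set of points with $y\in C$), so you cannot discard this case. The correct notion of genericity here is Kolchin-genericity over $F$ (taken in $\Omega_{(\mathscr{P},\mathscr{P}_0),\nu,P_0'}(V)$, not in $V$), and a single generic point is not enough: one needs a Morley sequence of length $m\to\infty$, precisely so that the fixed overhead $\mathrm{tr.deg.}_CF$ is divided by $m$ and disappears --- a single application of Ax--Schanuel with only a ``$+1$'' excess will be swallowed by that overhead.

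Once the $\nu>0$ case is in play, the architecture is different from what you sketch. The $(d+2)$-projection non-definability over $\overline{\mathbb{Q}}$ is not something one \emph{derives}; it is used as an upper bound: it gives $\nu\leq d+1$ and forces $\mathrm{tr.deg.}_FF(\mathbf{a}_i,\mathbf{c}_i)\leq d+1-\nu$ for the Morley sequence, which feeds into the transcendence count. And there is a residual subcase ($\nu=\ell_1$, $\dim W<\ell_1$, where $W$ is the closure of the constant-coordinate projection) where no transcendence count closes the loop and one must invoke modular Mordell--Lang on $W$ to conclude that the $\Lambda$-special points of $W$ are not Zariski dense. Your final sentence gestures at Theorems~\ref{thm:modularmordell-lang} and~\ref{thm:lambdaatypical} but does not identify this as the point where they are needed.
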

\begin{proof}
Let $(K,\partial)$ be a saturated differentially closed field of characteristic zero so that for every coordinate projection $\mathrm{pr}:\mathbb{A}_K^n\to\mathbb{A}^{d+2}_K$, $\mathrm{pr}(V)$ is not definable over the field of constants $C$ of $(K,\partial)$.
By saturation, $K$ has infinite transcendence degree over $C$. 
We choose a finite modularly independent set $\Lambda_0\subset K\setminus C$, and we let $\Lambda$ denote the corresponding structure of finite Hecke rank. 
Let $F\subseteq K$ be an algebraically closed subfield so that $\Lambda_0\cup C\subseteq F$, $V$ is defined over $F$ and $\mathrm{tr.deg.}_CF$ is finite. 

If $d=n$ then $\Upsilon_\Lambda(V)=\emptyset$ and there is nothing to prove. 
If $d = n-1$, then $\Upsilon_\Lambda(V)$ contains only $\Lambda$-special points.
But if such points were Zariski dense in $V$, then by modular Mordell--Lang (Theorem \ref{thm:modularmordell-lang}), $V$ would be a $\Lambda$-special variety. 
Since no coordinate is fixed on $V$, then this implies that $V$ is a proper strongly special variety, showing that $V$ is not Hodge-generic.  
So from now on, we assume that $d\leq n-2$. 
    
To prove the theorem we will assume that $V$ is Hodge-generic and some $\Omega_{(\mathscr{P},\mathscr{P}_0),\nu,P_0'}(V)$ is Zariski dense in $V$, and we will show that there is a proper subvariety $Z\subsetneq V$ such that intersection of $V$ with any $\Lambda$-special subvariety of special type $(\mathscr{P},\mathscr{P}_0)$ is either likely or contained in $Z$. 
Since there are only finitely many valid choices for the data $(\mathscr{P},\mathscr{P}_0),\nu,P_0'$, this will suffice to prove the theorem. 

Next we want to consider generic elements of $\Omega_{(\mathscr{P},\mathscr{P}_0),\nu,P_0'}(V)$ (as described in \S\ref{subsec:dcf}). 
Officially, to do this we first need to ensure that $\Omega_{(\mathscr{P},\mathscr{P}_0),\nu,P_0'}(V)$ is irreducible (in the Kolchin topology), but even if $\Omega_{(\mathscr{P},\mathscr{P}_0),\nu,P_0'}(V)$ is not irreducible, it has only finitely many irreducible components (by Noetherianity), and so the realizations of at least one of those components will also be Zariski dense in $V$. 
By quantifier elimination of the theory of differentially closed fields (of characteristic zero), the irreducible components are also definable, and so we may work with an appropriate irreducible component of $\Omega_{(\mathscr{P},\mathscr{P}_0),\nu,P_0'}(V)$ instead, if necessary. 

Let $\{(\mathbf{a}_i,\mathbf{c}_i)\}_{i\in\mathbb{N}}$ denote a Morley sequence in $\Omega_{(\mathscr{P},\mathscr{P}_0),\nu,P_0'}(V)$ over $F$. 
For each $i\in\mathbb{N}$, choose $\boldsymbol{\tau}_i\in K^{n}$ witnessing $(\mathbf{a}_i,\mathbf{c}_i)\in\Omega_{(\mathscr{P},\mathscr{P}_0),\nu,P_0'}(V)$.
We first observe that for every $i\in\mathbb{N}$, the coordinates of $\mathbf{a}_i$ are modularly independent because $V$ is Hodge-generic and no coordinate is fixed on $V$, so we can always choose a realization of $\Omega_{(\mathscr{P},\mathscr{P}_0),\nu,P_0'}(V)$ which avoids any finite collection of $\Lambda$-special varieties, and then by saturation of $K$, we may choose $\mathbf{a}_i$ avoiding all $\Lambda$-special varieties.
Furthermore, since $\{(\mathbf{a}_i,\mathbf{c}_i)\}_{i\in\mathbb{N}}$ is a Morley sequence, for every distinct $i,k\in\mathbb{N}$, the entries of $\mathbf{a}_i$ are modularly independent from the entries of $\mathbf{a}_k$.

The proof is separated into various cases, all of which involve similar transcendence degree inequality manipulations. 
The main difference between each of the cases is the upper bounds we use for $\mathrm{tr.deg.}_FF(\boldsymbol{\tau}_i)$ and $\mathrm{tr.deg.}_{F(\boldsymbol{\tau}_i)}F(\boldsymbol{\tau}_i,(\mathbf{a}_i,\mathbf{c}_i))$.

We first deal with the case $\nu = 0$.
By Ax--Schanuel (Theorem \ref{thm:ax-schanuel}) we have that for every positive integer $m$
\[\mathrm{tr.deg.}_CC(\boldsymbol{\tau}_1,\ldots,\boldsymbol{\tau}_m,\mathbf{a}_1,\ldots,\mathbf{a}_m)\geq nm+1.\]
On the other hand, $(\mathbf{a}_1,\ldots,\mathbf{a}_m)\in V^m$ and for each $i\in\mathbb{N}$, so combining this with (\ref{eq:tauupperbound}) gives
\begin{align*}
\mathrm{tr.deg.}_CF(\boldsymbol{\tau}_1,\ldots,\boldsymbol{\tau}_m,\mathbf{a}_1,\ldots,\mathbf{a}_m)&\leq \mathrm{tr.deg.}_CF + \mathrm{tr.deg.}_FF(\boldsymbol{\tau}_1,\ldots,\boldsymbol{\tau}_m) + md\\
    &\leq \mathrm{tr.deg.}_CF + m(|\mathscr{P}|-|\mathscr{P}_0|) + md.
\end{align*}
Since these two inequalities must hold for all values of $m$, we conclude that
\[n \leq |\mathscr{P}|-|\mathscr{P}_0|+d = \dim S + \dim V,\]
which shows that the intersection between $V$ and $S$ is likely. 

From now on we assume that $\nu>0$. 
Considering all coordinate projections $\mathrm{pr}:\mathbb{A}^n\to\mathbb{A}^{d+2}$ and using that $\mathrm{pr}(V)$ is not definable over $C$, we see that the set of points of $V$ for which at least $d+2$ many entries are in $C$ is also not Zariski dense in $V$. 
Therefore we assume $\nu\leq d+1 \leq n-1$. 

The condition that $\mathrm{pr}(V)$ is not definable over $C$ for any coordinate projection $\mathrm{pr}:\mathbb{A}^n\to\mathbb{A}^{d+2}$ has the following consequence: any $d+2-\nu$ entries of every $\mathbf{a}_i$ must be algebraically dependent over $F$. 
Indeed, otherwise we could find a subtuple $\mathbf{b}$ of $\mathbf{a}_i$ of length $d+2-\nu$ whose entries are algebraically independent over $F$, and then by considering the coordinate projection which maps $(\mathbf{a}_i,\mathbf{c_i})\mapsto(\mathbf{b},\mathbf{c}_i)$, we would get a coordinate projection $\mathbb{A}^n\to\mathbb{A}^{d+2}$ where the image of $V$ is defined over $F$, but where no polynomial relations defining the image of $V$ involve the coordinates corresponding to $\mathbf{b}$, only those in $\mathbf{c}$.
Since $\Omega_{(\mathscr{P},\mathscr{P}_0),\nu,P_0'}(V)$ is Zariski dense in $V$, this would then imply that $\mathrm{pr}(V)$ is defined over $C$, contradicting the hypotheses.
 We conclude from this that 
 \begin{equation}
     \label{eq:uppertda_i}
     \mathrm{tr.deg.}_{F}F(\mathbf{a}_i,\mathbf{c}_i) = \mathrm{tr.deg.}_{F}F(\mathbf{a}_i)\leq d+1-\nu=\dim V+1-\nu.
 \end{equation}

We now distinguish two cases. 
Recall that $P_0=\bigcup\mathscr{P}_0$ is the set of coordinates that have a fixed value on a weakly special variety $S\sim_s(\mathscr{P},\mathscr{P}_0)$. 
When $S$ is $\Lambda$-special, the values of the coordinates in $P_0$ need not be in $C$, and we use $P_0'$ to denote the subset of $P_0$ of coordinates whose value is not in $C$. 
So then the number $\ell_1:=|P_0| - |P_0'|$ is the number of coordinates that have a fixed value on $S$ and this value is in $C$.

Suppose first that $\ell_1 < \nu$. 
This means that there is $i\in\{1,\ldots,n\}$ such that $\tau_i\in C$ but $i\notin P_0$ (recall that we assume $\Lambda_0\cap C=\emptyset$).
Hence we can improve the upper bound of (\ref{eq:tauupperbound}) to $\mathrm{tr.deg.}_FF(\boldsymbol{\tau}_i)\leq |\mathscr{P}|-|\mathscr{P}_0|-1$.
  Combining this with (\ref{eq:uppertda_i}) gives
 \[  m(n-\nu) +1\leq \mathrm{tr.deg.}_CF + m(|\mathscr{P}|-|\mathscr{P}_0|-1) + m(\dim V+1-\nu),\]
 for all $m\in\mathbb{N}$, hence
 \[n \leq |\mathscr{P}|-|\mathscr{P}_0| + \dim V = \dim S+\dim V,\]
 resulting once again in a likely intersection. 

Finally, we consider the case $\nu=\ell_1$.
Consider now the coordinate projection $\mathrm{pr}_{\nu}:\mathbb{A}^n_K\to\mathbb{A}^{\ell_1}_K$ which maps $(\mathbf{a}_i,\mathbf{c}_i)\mapsto\mathbf{c}_i$.
Set $W = \overline{\mathrm{pr}_{\ell_1}(V)}^{\mathrm{Zar}}$. 
As $\Omega_{(\mathscr{P},\mathscr{P}_0),\nu,P_0'}(V)$ is Zariski dense in $V$,  $\mathrm{pr}_{\nu}\left(\Omega_{(\mathscr{P},\mathscr{P}_0),\nu,P_0'}(V)\right)$ is Zariski dense in $W$, which in particular implies that $W$ is defined over $C$. 
By the fibre-dimension theorem and the Zariski density of $\Omega_{(\mathscr{P},\mathscr{P}_0),\nu,P_0'}(V)$ in $V$ we may assume
 \[\mathrm{tr.deg.}_{F}F(\mathbf{a}_i,\mathbf{c}_i) = \dim V-\dim W,\]
 for all $i\in\mathbb{N}$. 
 Therefore using Ax--Schanuel (Theorem \ref{thm:ax-schanuel}) and (\ref{eq:tauupperbound}) as above gives
 \begin{align*}
    m(n-\ell_1) +1 &\leq \mathrm{tr.deg.}_CC(\boldsymbol{\tau}_1,\ldots,\boldsymbol{\tau}_m,(\mathbf{a}_1,\mathbf{c}_1),\ldots,(\mathbf{a}_m,\mathbf{c}_m))\\
    &\leq \mathrm{tr.deg.}_CF(\boldsymbol{\tau}_1,\ldots,\boldsymbol{\tau}_m,(\mathbf{a}_1,\mathbf{c}_1),\ldots,(\mathbf{a}_m,\mathbf{c}_m))\\
    &\leq \mathrm{tr.deg.}_CF + m(|\mathscr{P}|-|\mathscr{P}_0|) + m(\dim V-\dim W),
 \end{align*}
for all $m\in\mathbb{N}$. 
Hence
\[n-\ell_1 \leq |\mathscr{P}|-|\mathscr{P}_0| + \dim V-\dim W.\]
If $\dim W=\ell_1$ then we are done, since that implies $n\leq \dim S+\dim V$, giving a likely intersection once more. 

So suppose instead that $\dim W<\ell_1$. 
Consider a point $(\mathbf{a},\mathbf{c})\in\Upsilon_{\Lambda,(\mathscr{P},\mathscr{P}_0),\nu}(V)$. 
Then there is a $\Lambda$-special variety $S$ of special type $(\mathscr{P},\mathscr{P}_0)$ such that $(\mathbf{a},\mathbf{c})\in V\cap S$.
Since $\nu=\ell_1$ then the tuple $\mathbf{c}$ is made up of the values of the coordinates which have fixed value on $S$, and since $S$ is $\Lambda$-special, then $\mathbf{c}$ is a $\Lambda$-special point in $\mathbb{A}^{\ell_1}_K$. 
As no coordinate has a fixed value on $V$ and $V$ is Hodge-generic, then so is $W$, which shows that the weakly special closure of $W$ is $\mathbb{A}^{\ell_1}_K$. 
As $W$ is a proper subvariety of $\mathbb{A}^{\ell_1}_K$, then by modular Mordell--Lang (Theorem \ref{thm:modularmordell-lang}) we get that $W$ cannot have a Zariski dense subset of $\Lambda$-special points, because otherwise $W$ would be $\Lambda$-special, and we would conclude that $V$ is contained in a proper $\Lambda$-special subvariety. 
This shows that there is a proper Zariski closed subset $Z\subsetneq V$ containing $\Upsilon_{\Lambda,(\mathscr{P},\mathscr{P}_0),\nu}(V)$. 
\end{proof}

We are now ready to prove the main theorem.

\begin{proof}[Proof of Theorem \ref{thm:genericmzp}]
    After reindexing the coordinates, we may assume that the first $k$ coordinates have a fixed value on $V$, and no other coordinate is fixed on $V$. 
    Let $F$ be an algebraically closed subfield of $\mathbb{C}$ such that $V$ is defined over $F$ and $\mathrm{tr.deg.}_\mathbb{Q}F$ is finite.
    We proceed by induction on $k$, the case $k=0$ given by Theorem \ref{thm:mainj}.
    
    Suppose now that $k>0$. 
    Say that the values attained by the first $k$ coordinates are $\alpha_1,\ldots,\alpha_k$. 
    Since $V$ is Hodge-generic, then $\alpha_1,\ldots,\alpha_k$ are not singular moduli and they are modularly independent.

    Let $S$ be a special variety with $\dim S < n- \dim V$. 
    Let $(\boldsymbol{\alpha},\boldsymbol{\beta})$ be a generic point of some irreducible component of $V\cap S$ over $F$.
    We remark that while some coordinates may be constant on $S$, these coordinates cannot be among the first $k$. 
    
    If $\alpha_1$ is modularly independent from all the other entries in $(\boldsymbol{\alpha},\boldsymbol{\beta})$, then we consider the projection $\mathrm{pr}:\mathbb{A}^n\to\mathbb{A}^{n-1}$ which deletes the first coordinate. 
    Clearly $\dim V = \dim\mathrm{pr}(V)$, whereas $\dim \mathrm{pr}(S) = \dim S-1$, so the intersection $\mathrm{pr}(S)\cap\mathrm{pr}(V)$ is still unlikely. 
    We can now apply the induction hypothesis to $\mathrm{pr}(V)$ to conclude that the unlikely locus of $\mathrm{pr}(V)$ is not Zariski dense in $\mathrm{pr}(V)$, and we conclude that there is a proper Zariski closed subset $Z\subsetneq V$ containing all the intersections $V\cap S$, where $\dim S < n-\dim V$ and one of the first $k$ coordinates is modularly independent from all others when restricted to $S$. 

    We now assume that each $\alpha_i$ is modularly dependent with some entry of $\boldsymbol{\beta}$. 
    Consider now the projection $\mathrm{pr}:\mathbb{A}^n\to\mathbb{A}^{n-k}$ mapping  $(\boldsymbol{\alpha},\boldsymbol{\beta})\to\boldsymbol{\beta}$. 
    We still have $\dim V=\dim\mathrm{pr}(V)$. 
    Set $\Lambda_0=\{\alpha_1,\ldots,\alpha_k\}$ and let $\Lambda\subseteq\mathbb{A}^n_{\mathbb{C}}$ be the structure of finite Hecke-rank determined by $\Lambda_0$. 
    Then $\boldsymbol{\beta}$ is contained in the intersection of $\mathrm{pr}(V)$ with a $\Lambda$-special subvariety of dimension $\leq \dim S-k$. 
    We can now apply Theorem \ref{thm:mainj} to $\mathrm{pr}(V)$ and complete the induction. 
\end{proof}

\begin{remark}
\label{remark:atypical}
    As we mentioned in \S\ref{subsec:mzp}, the three formulations of modular Zilber--Pink we presented are equivalent, but the equivalences are between the full conjectures, not variety to variety, so our proof does not readily yield that a variety $V\subseteq \mathbb{A}^n_{\mathbb{C}}$ satisfying the conditions of Theorem \ref{thm:genericmzp} has only finitely many maximal atypical components or that $\mathrm{Opt}(V)$ is finite. 
    However, as explained in \cite[Remark 4.5]{pila-scanlon}, we can deduce that the union of maximal atypical components is not Zariski dense in $V$ by intersecting $V$ with very general linear spaces (and hence, the union of the proper optimal components of $V$ is also not Zariski dense in $V$, by Remark \ref{rem:maxatypisopt}). 

For this, first note that since $V$ is not contained in a proper special subvariety, then $V$ is a typical component of itself, and so any maximal atypical component of $V$ must be a proper subvariety. 
Next, observe that a maximal atypical component of $V$ of dimension zero belongs to an unlikely intersection, so it is contained in $\overline{\Upsilon(V)}^{\mathrm{Zar}}$.
Next, we choose an algebraically closed field $F$ so that $V$ is defined over $F$ and $\mathrm{tr.deg.}_{\mathbb{Q}}F$ is finite. 
Then any maximal atypical component of $V$ is also defined over $F$, so there are at most countably many of them. 

We want to find a hypersurface $L\subseteq\mathbb{A}^n_{\mathbb{C}}$ defined by a linear polynomial such that $\dim V\cap L = \dim V-1$, the irreducible components of $V\cap L$ of maximal dimension also satisfy the conditions of Theorem \ref{thm:genericmzp}, and so that for every maximal atypical component $Y$ of $V$ we have $\dim Y\cap L=\dim Y-1$. 
All these conditions impose at most countably many restrictions on the parameters defining $L$, so we may choose generic parameters in $\mathbb{C}$ to find $L$ with the desired properties. 
These conditions then ensure that $Y\cap L$ remains maximal atypical in $V\cap L$, so by induction we conclude that the maximal atypical components of $V$ are not Zariski dense. 
\end{remark}

\begin{remark}
    \label{rem:effective}
As explained in \cite{pila-scanlon}, there are methods to give effective bounds on the degree of the Zariski closure of definable sets in differentially closed fields of characteristic zero, see \cite{binyamini:bezout,freitag-leon:effective,hrushovski-pillay}; in particular we can obtain effective bounds for the degree of the Zariski closure of $\Omega(V)$. 
So, just like in \cite[Corollary 1.1]{pila-scanlon}, the unlikely locus of $V$ is contained in an algebraic set whose degree can be bounded effectively in terms of the degree of $V$ and $n$ (the dimension of the ambient space in which $V$ lies). 
In view of this, there are two natural problems one can consider at this point:
    \begin{enumerate}[(a)]
        \item the Zilber--Pink conjecture is concerned with $\Upsilon(V)$, so we would prefer to have effective bounds on the degree of $\overline{\Upsilon(V)}^{\mathrm{Zar}}$, and
        \item our proof of Theorem \ref{thm:mainj} does not show that $\Omega_\Lambda(V)$ is not Zariski dense in $V$.
    \end{enumerate}

Regarding (a): the main obstacle towards obtaining effective bounds for $\overline{\Upsilon(V)}^{\mathrm{Zar}}$ using the proof we have presented is that this requires an effective form of modular Mordell--Lang.
Even effective Andr\'e--Oort is already a very difficult question, with few cases having yielded a successful answer, some of which we recall now.
The case of plane curves is due to K\"uhne \cite{khune:andre-oort1} (see also \cite{wustholz:andre-oort-pink} for important extra details), and also independently proven by Bilu, Masser and Zannier \cite{bilu-masser-zannier:effectiveao}.
The case of linear subspaces was treated by Bilu and K\"uhne \cite{bilu-kuhne}, and
Binyamini considered \emph{hereditarily degree non-degenarate} hypersurfaces in \cite[Corollary 4]{binyamini:effectiveao}.

Regarding (b): although the proof of Theorem \ref{thm:mainj} does not show $\Omega_\Lambda(V)$ is not Zariski dense, it does show that various subsets of $\Omega_\Lambda(V)$ cannot be Zariski dense in $V$, leaving only the last case, where we looked at the points in $\Upsilon_\Lambda(V)$ instead of $\Omega_\Lambda(V)$ and appealed to modular Mordell--Lang.
To be concrete, suppose $(\mathscr{P},\mathscr{P}_0)$ is such that $|\mathscr{P}|-|\mathscr{P}_0| < n - \dim V$. 
The proof of Theorem \ref{thm:mainj} shows that if $\Omega_{(\mathscr{P},\mathscr{P}_0),\nu,P_0'}(V)$ were Zariski dense in $V$, then the following conditions are necessary:
\begin{enumerate}[(i)]
    \item $\nu>0$, 
    \item $\ell_1 = \nu$, where $\ell_1 = |P_0| - |P_0'|$, and 
    \item $\dim W < \ell_1$,  where $W$ is the Zariski closure of the projection of $V$ to $\mathbb{A}^{\ell_1}_K$. 
\end{enumerate}
In particular, if we only consider the unlikely intersections between $V$ and strongly special varieties (as it is done in \cite[Corollary 1.1]{pila-scanlon}), then we do not need to consider a set $\Omega_{(\mathscr{P},\mathscr{P}_0),\nu,P_0'}(V)$ satisfying the above conditions (because $\ell_1$ equals 0 in this case), and so this part of the unlikely locus is contained in a proper algebraic subset of $V$ whose degree is effectively bounded. 
\end{remark}

\bibliographystyle{alpha}
\bibliography{references}{}

\begin{thebibliography}{MPT19}

\bibitem[AD24]{aslanyan-daw}
Vahagn Aslanyan and Christopher Daw.
\newblock A note on unlikely intersections in {S}himura varieties.
\newblock {\em J. Number Theory}, 260:212--222, 2024.

\bibitem[AK22]{aslanyan-kirby}
Vahagn Aslanyan and Jonathan Kirby.
\newblock Blurrings of the {$J$}-function.
\newblock {\em Q. J. Math.}, 73(2):461--475, 2022.
\newblock \url{https://doi.org/10.1093/qmath/haab037}.

\bibitem[Asl21a]{aslanyan:strongminj}
Vahagn Aslanyan.
\newblock Ax-{S}chanuel and strong minimality for the {$j$}-function.
\newblock {\em Ann. Pure Appl. Logic}, 172(1):Paper No. 102871, 24, 2021.

\bibitem[Asl21b]{aslanyan:atypical}
Vahagn Aslanyan.
\newblock Some remarks on atypical intersections.
\newblock {\em Proc. Amer. Math. Soc.}, 149(11):4649--4660, 2021.
\newblock \url{https://doi.org/10.1090/proc/15611}.

\bibitem[BD24]{barroero-dill:distinguishedcategories}
Fabrizio Barroero and Gabriel~Andreas Dill.
\newblock {Distinguished categories and the Zilber-Pink conjecture}, 2024.

\bibitem[BD25]{barroero-dill:heckeorbits}
Fabrizio Barroero and Gabriel~Andreas Dill.
\newblock {Hecke orbits and the Mordell-Lang conjecture in distinguished categories}, 2025.

\bibitem[Bin17]{binyamini:bezout}
Gal Binyamini.
\newblock Bezout-type theorems for differential fields.
\newblock {\em Compos. Math.}, 153(4):867--888, 2017.

\bibitem[Bin20]{binyamini:effectiveao}
Gal Binyamini.
\newblock Some effective estimates for {A}ndr\'e-{O}ort in {$Y(1)^n$}.
\newblock {\em J. Reine Angew. Math.}, 767:17--35, 2020.
\newblock With an appendix by Emmanuel Kowalski.

\bibitem[BK20]{bilu-kuhne}
Yuri Bilu and Lars K\"uhne.
\newblock Linear equations in singular moduli.
\newblock {\em Int. Math. Res. Not. IMRN}, (21):7617--7643, 2020.

\bibitem[BMZ13]{bilu-masser-zannier:effectiveao}
Yuri Bilu, David Masser, and Umberto Zannier.
\newblock An effective ``theorem of {A}ndr\'e'' for {$CM$}-points on a plane curve.
\newblock {\em Math. Proc. Cambridge Philos. Soc.}, 154(1):145--152, 2013.

\bibitem[Daw25]{daw:unlikelyintersectionsshimuravarieties}
Christopher Daw.
\newblock Unlikely intersections in shimura varieties and beyond: a survey, 2025.

\bibitem[DO23]{daw-orr:lattices}
Christopher Daw and Martin Orr.
\newblock Lattices with skew-{H}ermitian forms over division algebras and unlikely intersections.
\newblock {\em J. \'Ec. polytech. Math.}, 10:1097--1156, 2023.

\bibitem[DO25]{daw-orr:zpproductmodularcurves}
Christopher Daw and Martin Orr.
\newblock Zilber-pink in a product of modular curves assuming multiplicative degeneration, 2025.

\bibitem[DR18]{daw-ren}
Christopher Daw and Jinbo Ren.
\newblock Applications of the hyperbolic {A}x-{S}chanuel conjecture.
\newblock {\em Compos. Math.}, 154(9):1843--1888, 2018.
\newblock \url{https://doi.org/10.1112/s0010437x1800725x}.

\bibitem[FLS16]{freitag-leon:effective}
James Freitag and Omar Le\'on~S\'anchez.
\newblock Effective uniform bounding in partial differential fields.
\newblock {\em Adv. Math.}, 288:308--336, 2016.

\bibitem[FS18]{freitag-scanlon:strongminj}
James Freitag and Thomas Scanlon.
\newblock Strong minimality and the {$j$}-function.
\newblock {\em J. Eur. Math. Soc. (JEMS)}, 20(1):119--136, 2018.

\bibitem[HP00]{hrushovski-pillay}
Ehud Hrushovski and Anand Pillay.
\newblock Effective bounds for the number of transcendental points on subvarieties of semi-abelian varieties.
\newblock {\em Amer. J. Math.}, 122(3):439--450, 2000.

\bibitem[HP12]{habegger-pila}
P.~Habegger and J.~Pila.
\newblock Some unlikely intersections beyond {A}ndr\'{e}-{O}ort.
\newblock {\em Compos. Math.}, 148(1):1--27, 2012.
\newblock \url{https://doi.org/10.1112/S0010437X11005604}.

\bibitem[HP16]{habegger-pila:o-min}
Philipp Habegger and Jonathan Pila.
\newblock O-minimality and certain atypical intersections.
\newblock {\em Ann. Sci. \'{E}c. Norm. Sup\'{e}r. (4)}, 49(4):813--858, 2016.
\newblock \url{https://doi.org/10.24033/asens.2296}.

\bibitem[Kli23]{klingler:hodgetheoryicm}
Bruno Klingler.
\newblock Hodge theory, between algebraicity and transcendence.
\newblock In {\em I{CM}---{I}nternational {C}ongress of {M}athematicians. {V}ol. 3. {S}ections 1--4}, pages 2250--2284. EMS Press, Berlin, [2023] \copyright 2023.

\bibitem[KT25]{klingler-tayou:zilberpink}
Bruno Klingler and Salim Tayou.
\newblock The {Z}ilber-{P}ink conjecture for varieties not defined over {$\overline{\mathbb {Q}}$}, 2025.

\bibitem[K{\"u}h12]{khune:andre-oort1}
Lars K{\"u}hne.
\newblock An effective result of {A}ndr\'e-{O}ort type.
\newblock {\em Ann. of Math. (2)}, 176(1):651--671, 2012.

\bibitem[Lan87]{lang:elliptic}
Serge Lang.
\newblock {\em Elliptic functions}, volume 112 of {\em Graduate Texts in Mathematics}.
\newblock Springer-Verlag, New York, second edition, 1987.
\newblock With an appendix by J. Tate, \url{https://doi.org/10.1007/978-1-4612-4752-4}.

\bibitem[Mah69]{mahler}
Kurt Mahler.
\newblock On algebraic differential equations satisfied by automorphic functions.
\newblock {\em J. Austral. Math. Soc.}, 10:445--450, 1969.

\bibitem[Mar00]{marker:modeltheorydcf}
David Marker.
\newblock Model theory of differential fields.
\newblock In {\em Model theory, algebra, and geometry}, volume~39 of {\em Math. Sci. Res. Inst. Publ.}, pages 53--63. Cambridge Univ. Press, Cambridge, 2000.

\bibitem[Mas03]{masser}
David Masser.
\newblock Heights, transcendence, and linear independence on commutative group varieties.
\newblock In {\em Diophantine approximation ({C}etraro, 2000)}, volume 1819 of {\em Lecture Notes in Math.}, pages 1--51. Springer, Berlin, 2003.

\bibitem[MPT19]{mpt}
Ngaiming Mok, Jonathan Pila, and Jacob Tsimerman.
\newblock Ax-{S}chanuel for {S}himura varieties.
\newblock {\em Ann. of Math. (2)}, 189(3):945--978, 2019.
\newblock \url{https://doi.org/10.4007/annals.2019.189.3.7}.

\bibitem[Pap24]{papas:zpnmultiplicativedeg}
Georgios Papas.
\newblock {Z}ilber-{P}ink in $y(1)^n$: Beyond multiplicative degeneration, 2024.

\bibitem[Pil11]{pila:andre-oort}
Jonathan Pila.
\newblock O-minimality and the {A}ndr\'{e}-{O}ort conjecture for {$\mathbb{C}^n$}.
\newblock {\em Ann. of Math. (2)}, 173(3):1779--1840, 2011.
\newblock \url{https://doi.org/10.4007/annals.2011.173.3.11}.

\bibitem[Pil14]{pila:specialpts}
Jonathan Pila.
\newblock Special point problems with elliptic modular surfaces.
\newblock {\em Mathematika}, 60(1):1--31, 2014.

\bibitem[Pil17]{pila:fermat}
Jonathan Pila.
\newblock On a modular {F}ermat equation.
\newblock {\em Comment. Math. Helv.}, 92(1):85--103, 2017.

\bibitem[Pil22]{pila_2022}
Jonathan Pila.
\newblock {\em Point-counting and the {Z}ilber-{P}ink conjecture}, volume 228 of {\em Cambridge Tracts in Mathematics}.
\newblock Cambridge University Press, Cambridge, 2022.
\newblock \url{https://doi.org/10.1017/9781009170314}.

\bibitem[PS21]{pila-scanlon}
Jonathan Pila and Thomas Scanlon.
\newblock Effective transcendental {Z}ilber-{P}ink for variations of {H}odge structures.
\newblock \url{https://arxiv.org/abs/2105.05845}, 2021.

\bibitem[PT16]{pila-tsimerman:ax-schanuel}
Jonathan Pila and Jacob Tsimerman.
\newblock Ax-{S}chanuel for the {$j$}-function.
\newblock {\em Duke Math. J.}, 165(13):2587--2605, 2016.
\newblock \url{https://doi.org/10.1215/00127094-3620005}.

\bibitem[RY24]{richard-yafaev:heightfunctions}
Rodolphe Richard and Andrei Yafaev.
\newblock Height functions on {H}ecke orbits and the generalised {A}ndr\'e-{P}ink-{Z}annier conjecture.
\newblock {\em Compos. Math.}, 160(11):2531--2584, 2024.

\bibitem[RY25]{richard-yafaev:generalizedAPZ}
Rodolphe Richard and Andrei Yafaev.
\newblock Generalized {A}ndr\'e-{P}ink-{Z}annier conjecture for {S}himura varieties of {A}belian type.
\newblock {\em Publ. math. IHES}, 141:249--331, 2025.

\bibitem[W{\"u}s14]{wustholz:andre-oort-pink}
Gisbert W{\"u}stholz.
\newblock A note on the conjectures of {A}ndr\'e-{O}ort and {P}ink with an appendix by {L}ars {K}\"uhne.
\newblock {\em Bull. Inst. Math. Acad. Sin. (N.S.)}, 9(4):735--779, 2014.
\newblock With an appendix by Lars K\"uhne.

\bibitem[Zag08]{zagier:elliptic}
Don Zagier.
\newblock Elliptic modular forms and their applications.
\newblock In {\em The 1-2-3 of modular forms}, Universitext, pages 1--103. Springer, Berlin, 2008.

\end{thebibliography}

\end{document}